\title[Tower of fully commutative elements of type $\tilde C$]{On the fully commutative 
elements of type $\tilde C$  \\ 
 and  faithfulness of related towers}
\author{Sadek AL HARBAT}
\address{} 
\email{sadikharbat@math.univ-paris-diderot.fr}
\let\mathbb\mathds
\newtheorem{theorem}{Theorem}[section]
\newtheorem{definition}[theorem]{Definition}
\newtheorem{proposition}[theorem]{Proposition}
\newtheorem{lemma}[theorem]{Lemma}
\newtheorem{corollary}[theorem]{Corollary}
\newtheorem{example}[theorem]{Example}
\newtheorem{remark}[theorem]{Remark}
    \newlength{\myarrowsize} 
    \newlength{\myoldlinewidth}
\tikzstyle{vecArrow} = [thick, decoration={markings,mark=at position
\tikzstyle{innerWhite} = [semithick, white,line width=1.4pt, shorten >= 4.5pt]
	\newcommand\POSITION[3]{%
	\begingroup
	\@tempdim@x=0cm
	\@tempdim@y=\paperheight
	\advance\@tempdim@x#1
	\advance\@tempdim@y-#2
	\put(\LenToUnit{\@tempdim@x},\LenToUnit{\@tempdim@y}){#3}%
	\endgroup
	}
\begin{document}

	\begin{abstract}
	We define a tower of injections of  $\tilde{C}$-type Coxeter groups $W(\tilde C_{n})$ for $n\geq 1$. We define a tower of Hecke algebras and we use the faithfulness at the Coxeter level to show that this last tower is a tower of injections. Let $W^c(\tilde C_{n})$ be the set of  fully commutative elements in $W(\tilde C_{n})$, we classify  the elements of $W^c(\tilde C_{n})$  and  give a normal form for them.  We  
use this normal form to define two injections from $W^c(\tilde C_{n-1})$ into $W^c(\tilde C_{n})$. We then define the tower of affine Temperley-Lieb algebras of type $\tilde{C }$ and use the injections above to prove the faithfullness of this tower.    
	\end{abstract}

 	\maketitle
	
	\keywords{Affine Coxeter groups; affine Hecke algebra;  affine Temperley-Lieb algebra; fully commutative elements.}

\section{Introduction}

In his fundamental paper \cite{J87}, in 1987, Jones has shown that the 
original Temperley-Lieb algebra, a certain finite-dimensional algebra appearing in   statistical mechanics as well as
 in physics, could be viewed as a quotient   of the Hecke algebra of type $A_n$ \cite[\S 11 and Note 13.20]{J87}. He also showed that the Markov trace, 
the famous invariant of braids, was actually a trace defined on the tower of Temperley-Lieb algebras of type $A$ --- the natural embedding of a Coxeter diagram of type $A_n$ into a   Coxeter diagram of type $A_{n+1}$, for $n \ge 1$, 
inducing monomorphisms  of the associated   Coxeter groups, Hecke algebras and Temperley-Lieb algebras. 

This opened a door to intensive study. In the same year (1995),  Fan \cite{Fan_1995}  
 (in the simply laced case) and Graham 
\cite{Graham}, in their theses, defined and studied generalized Temperley-Lieb algebras, that are quotients of Hecke algebras of Coxeter groups generalizing the original one, and identified a basis for them,  indexed by elements of the associated Coxeter group enjoying special properties, 
while Stembridge \cite{St96} defined and studied  {\it fully commutative elements} in a Coxeter group: 

{\it  
An element $w$ of a Coxeter group $W$ is {\rm fully commutative} 
if any reduced
expression for $w$ can be obtained from any other by means of braid relations that only involve commuting generators.} 
 
Since then, fully 
 commutative elements of Coxeter groups, that  do index a basis of the associated generalized Temperley-Lieb algebra, have been studied in their own right by numerous authors, as well as generalized Temperley-Lieb algebras. 
 Nevertheless the  case of affine Coxeter groups was not much studied until recently. 

An obvious difficulty 
of the affine case is that we have to deal with infinite groups and infinite-dimensional algebras. 
Yet a main difficulty of the affine case is the lack of parabolicity.  
In the series of finite Coxeter groups 
$(W(A_n))_{n \ge 1}$, $W(B_n)_{n \ge 2}$,  $W(D_n)_{n \ge 3}$,   the $n$-th  group is a parabolic subgroup of  the $(n+1)$-th one,  so that the associated    Hecke algebras and  Temperley-Lieb algebras inject naturally into one another (that is, the $n$-th into the $(n+1)$-th).  This is no longer the case for affine Coxeter groups, for which  defining morphisms and proving injectivity is not straightforward. 

For instance, the author in his thesis \cite{Sadek_Thesis} defined and studied 
a tower of affine Temperley-Lieb algebras of type $\tilde A$ and defined  on this tower the notion of a Markov trace, 
for which he proved existence and uniqueness, hence giving an invariant of affine links \cite{Sadek_2013_2}. A crucial tool in this study was to produce a normal form for fully commutative elements in Coxeter groups of type $\tilde A$. It is only later that he could prove the faithfulness  of this tower, by means of a faithful tower of 
 fully commutative elements   of type $\tilde A$ depending on their normal form \cite{Sadek_2015}.

\medskip

The center of interest of the present work is this "tower"-point of view on the structures of type $\tilde{C}$, namely Coxeter groups, Hecke algebras and Temperley-Lieb algebras, 
for which 
  fully commutative elements index at least two well-known bases. The heart of this work is the production of a normal form for 
  fully commutative elements in  Coxeter groups of type $\tilde{C}$, normal form that is subsequently used to build an injective tower  of 
 fully commutative elements   of type $\tilde C$ and ultimately prove the faithfulness of the tower 
of   Temperley-Lieb algebras of type $\tilde{C}$ that we define.   \\

In his thesis \cite{Ernst_2008} Ernst has given a faithful diagrammatic presentation for $\tilde{C}$-type Temperley-Lieb algebras  (see also \cite{Ernst_2012}). The method  there was to  classify fully commutative elements in Coxeter groups of type $\tilde{C}$ that are irreducible under ``weak star reduction''. In this paper we use simple algebraic methods, in particular  the notion of affine length (Definition \ref{AL}) to classify fully commutative elements by giving a normal form for each (Theorem \ref{FC}). 

This is the second affine normal form of fully commutative elements, after the one for  type $\tilde A$ in \cite{Sadek_2015}. The author has also obtained  such a normal form in types $\tilde B$ and $\tilde D$,
these normal forms allowed us in an unpublished work  to calculate the length generating function $ \sum_{w \in \tilde W^c } q^{l(w)}$ explicitly, i.e. as a rational polynomial in $q$ (see\cite{HJ}). We notice that affine fully commutative elements do not behave in a wild way in the four infinite families of affine Coxeter groups:  they have a subword that is a power of  a Coxeter element, except for the $\tilde C$ type  where  we have to distinguish  between two families of   elements, one of which involves as a subword a power of the element 
$[  -n,n ] \;  t_{n+1} = \sigma_{n}  \sigma_{n-1} \dots  \sigma_{1} t  \sigma_{1} \dots  \sigma_{n-1}  \sigma_{n} t_{n+1}$ (Theorem  \ref{FC}). For normal forms of fully commutative elements in the finite Coxeter groups of   types $ A, B$ and $ D$ we refer to  \cite{St}.  

Authors in \cite{CFC} have studied cyclically fully commutative elements.  
We    give in Remark \ref{CC} some examples of the way in which  our normal form can express such elements.    \\

Another motive for learning more about fully commutative elements of type  $\tilde C$ is to give an answer to Green's hypothesis, see  \cite{ Green2007}, especially Property B, that is the existence of a symmetric bilinear form with some nice properties linked to a known method given by Green (see for example \cite{Green2006}) about a Jones-like trace allowing us to compute the coefficients of  Kazhdan-Lusztig polynomials. In the  $\tilde A$  case for example the author has used the normal form to define a class of elements such that any trace is uniquely defined by its value on them \cite[Definition 4.5, Theorem 4.6]{Sadek_2013_2}, and that is what we intend to do later with the type  $\tilde C$: we wish  
  to define and classify such traces in the type $\tilde C$. The topological interest here comes from the fact that the closures of $\tilde C$-type braids are the  links in a double torus.  \\

A further   motivation is  linked to the "parabolic-like presentation" defined in \cite{Sadek_Thesis} in the $\tilde A$ case and recently by the author for  type $\tilde C$. We would like to prove that the behavior of our structures does not change much if we define them with another presentation,  where the tower would be defined by adding a generator, at the cost of replacing a  braid relation by a "braid-like" relation. \\

This paper is divided into three parts and  it is organized as follows:  

 The first part is centered around the towers of Coxeter groups and Hecke algebras of type $\tilde C$. In section 2, we define a group morphism from the Coxeter group $W(\tilde{C}_{n-1}) $ of type $\tilde{C}_{n-1}$ 
to $W(\tilde{C}_{n}   )$ and  we prove the injectivity of this morphism by considering  $W(\tilde{C}_{n-1} ) $   and $W(\tilde{C}_{n} ) $  as   subgroups  of $W(\tilde{A}_{2n-1} ) $ and  $W(\tilde{A}_{2n+1} ) $ respectively 
 (Corollary \ref{Coxeterinj}).  In section 3, we let $K$ be a commutative ring with identity and we let $q$ be an invertible element of $K$. We define the tower of  $\tilde C$-type Hecke algebras $H\tilde C_n(q)$. We  
prove the injectivity of this tower for $K =  \mathbf Q[q,q^{-1}]$  using the specialization  $q=1$  (Proposition \ref{pr_3_3_3}).  
 
 The second part, section 4,  is centered around the normal form of fully commutative elements  of $W (\tilde C_{n})$. After recalling  the normal form   in type $B$ given by Stembridge   in  \cite[Theorem 5.1]{St}, we define the affine length of an element of  $W (\tilde C_{n})$ and  we   establish the $\tilde C$-version of Stembridge's result, namely Theorem  \ref{FC}, that determines a normal form for fully commutative elements of $W (\tilde C_{n})$. This is the main result in this section and the base point of what follows.  
 
In the third part we describe two towers of fully commutative elements which will lead to the faithfulness of the tower of  $\tilde C$-type Temperley-Lieb algebras. In section 5, we define    two injections $I$ and $J$ from  the set $W^c(\tilde C_{n-1})$ of fully commutative elements in $W (\tilde C_{n-1})$ into  $W^c(\tilde C_{n})$ and their essential  properties, this is Theorem \ref{IJ},  of which the proof depends totally on the normal form. In section 6, we define  the tower of $\tilde C$-type Temperley-Lieb algebras, then, as an application of our normal form, we prove     the faithfulness of the arrows of this tower  in Theorem  \ref{R}. The proof uses in a crucial way the injections $I$ and $J$ of the previous section.  \\

\section{A faithful tower  of $\tilde C$-type Coxeter groups}\label{group tower}

Let $(W,S)$ be a Coxeter system with associated Coxeter diagram $\Gamma$. Let $w\in W(\Gamma)$ or simply $W$. We denote by $l(w)$ the length of a (any) reduced expression of $w$.   We define $\mathscr{L} (w) $ to be the set of $s\in S$ such that $l(sw)<l(w)$, in other terms  $s$ appears at the left edge of some reduced expression of $w$.  We define $\mathscr{R}(w)$ similarly, on the right.\\

Consider the $B$-type Coxeter group with $n+1$ generators $W(B_{n+1})$, with the following Coxeter diagram:\\
			
			\begin{figure}[ht]
				\centering
				\begin{tikzpicture}

 \filldraw (-1.5,0) circle (2pt);
  \node at (-1.5,-0.5) {$t=\sigma_0$}; 

  \draw (-1.5,-0.07) -- (0, -0.07);
  \draw (-1.5,0.07) -- (0, 0.07);
  
  \filldraw (0,0) circle (2pt);
  \node at (0,-0.5) {$\sigma_{1}$}; 
   
  \draw (0,0) -- (1.5, 0);

  \filldraw (1.5,0) circle (2pt);
  \node at (1.5,-0.5) {$\sigma_{2}$};

  \draw (1.5,0) -- (3, 0);

  \node at (3.5,0) {$\dots$};

  \draw (4,0) -- (5.5, 0);
  
  \filldraw (5.5,0) circle (2pt);
  \node at (5.5,-0.5) {$\sigma_{n-1}$};
 
  \draw (5.5,0) -- (7, 0);
  
  \filldraw (7,0) circle (2pt);
  \node at (7,-0.5) {$\sigma_{n}$};

               \end{tikzpicture}
			\end{figure}

Now let $W(\tilde{C}_{n+1}) $ be the affine Coxeter group of $\tilde{C}$-type with $n+2$ generators in which $W(B_{n+1})$ could be seen a parabolic subgroup in two ways. We make our choice by presenting $W(\tilde{C}_{n+1} ) $ with the following Coxeter diagram: \\
			
			\begin{figure}[ht]
				\centering
				\begin{tikzpicture}

 \filldraw (-1.5,0) circle (2pt);
  \node at (-1.5,-0.5) {$t=\sigma_0$}; 

  \draw (-1.5,-0.07) -- (0, -0.07);
  \draw (-1.5,0.07) -- (0, 0.07);
  
  \filldraw (0,0) circle (2pt);
  \node at (0,-0.5) {$\sigma_{1}$}; 
   
  \draw (0,0) -- (1.5, 0);

  \filldraw (1.5,0) circle (2pt);
  \node at (1.5,-0.5) {$\sigma_{2}$};

  \draw (1.5,0) -- (3, 0);

  \node at (3.5,0) {$\dots$};

  \draw (4,0) -- (5.5, 0);
  
  \filldraw (5.5,0) circle (2pt);
  \node at (5.5,-0.5) {$\sigma_{n-1}$};
 
  \draw (5.5,0) -- (7, 0);
  
  \filldraw (7,0) circle (2pt);
  \node at (7,-0.5) {$\sigma_{n}$};

   \draw (7,-0.07) -- (8.5, -0.07);
  \draw (7,0.07) -- (8.5, 0.07);
  
   \filldraw (8.5,0) circle (2pt);
  \node at (8.5,-0.5) {$t_{n+1}$};

               \end{tikzpicture}
			\end{figure}
In other words the group  $W(\tilde{C}_{n+1} ) $ has a presentation given by the set of generators 
$\{ \sigma_{0}, \sigma_{1}, \dots, \sigma_{n},t_{n+1} \}$ and the relations: 
$$ \begin{aligned}
&  t_{n+1}^2 = 1 \text{ and } \sigma_i^2 = 1  \text{ for } 0\le i \le n ; \\ 
&\sigma_i \sigma_{j} = \sigma_{j} \sigma_i \text{ for } 0\le i, j \le n, \ |i-j|\ge  2 ; \\
&\sigma_i t_{n+1} = t_{n+1}\sigma_i \text{ for } 0\le i < n ; \\ 
&\sigma_i \sigma_{i+1} \sigma_i = \sigma_{i+1} \sigma_i\sigma_{i+1}  \text{ for } 1\le i \le n-1;\\ 
&\sigma_0 \sigma_{1}\sigma_0 \sigma_{1} =  \sigma_{1}\sigma_0 \sigma_{1}\sigma_0  ; \\
&\sigma_n t_{n+1}\sigma_n t_{n+1}= t_{n+1}\sigma_n t_{n+1}\sigma_n. 
\end{aligned}
$$

It is easy to check that the subset $\{ \sigma_{0}, \sigma_{1}, \dots, \sigma_{n-1},\sigma_n t_{n+1}\sigma_n \}$ 
of $W(\tilde{C}_{n+1} ) $ satisfies the defining relations for $W(\tilde{C}_{n} ) $. 
We may thus define the tower of   $\tilde C$-type Coxeter groups by defining the following   group homomorphism,  for $ n \ge 2  $: \\ 

\begin{eqnarray}
					  P_n: W(\tilde C_{n }) &\longrightarrow& W(\tilde C_{n+1})\nonumber\\
					\sigma_{i} &\longmapsto& \sigma_{i}$ ~~~ \text{for} $0\leq i\leq n-1 \nonumber\\
					t_{n} &\longmapsto& \sigma_n t_{n+1} \sigma_n \nonumber\\
  \nonumber
				\end{eqnarray}

		The  goal of this section is to prove the faithfulness of this arrow.	We will do so by  embedding our $\tilde C$-type Coxeter groups in $\tilde A$-type Coxeter groups and using the faithfulness of a relevant arrow between $\tilde A$-type Coxeter groups, as follows. 
Let $W(\tilde A_{n-1})$ be the $\tilde A$-type  Coxeter group with $n$ generators, say  
$\left\{ s_{1},  \dots ,s_{n-1},  a_{n}    \right\} $, with the following Coxeter diagram: 

			\begin{figure}[ht]
				\centering
				\begin{tikzpicture}

 \node at (0,0.5) {$s_{1}$}; 
  \filldraw (0,0) circle (2pt);
   
  \draw (0,0) -- (1.5, 0);
  
  \node at (1.5,0.5) {$s_{2}$};
  \filldraw (1.5,0) circle (2pt);

\draw (1.5,0) -- (3, 0);

  \node at (3.5,0) {$\dots$};

  \draw (4,0) -- (5.5, 0);

  \node at (5.5,0.5) {$s_{n-2}$};
  \filldraw (5.5,0) circle (2pt);
 
  \draw (5.5,0) -- (7, 0);
  
  \node at (7,0.5) {$s_{n-1}$};
  \filldraw (7,0) circle (2pt);

  \draw (7,0) -- (3.5, -1.5);
  
  \filldraw (3.5, -1.5) circle (2pt);
  \node at (3.5, -2) {$a_{n}$};

  \draw (3.5, -1.5) -- (0, 0);
               \end{tikzpicture}
			\end{figure}
 
\medskip
The group 
 $W(\tilde C_{n}) $ can be seen as the group of fixed points in $W(\tilde A_{2n-1})$ by some involution, 
so that we have 
  an embedding of $W(\tilde C_{n}) $  in $W(\tilde A_{2n-1})$ given as follows (see \cite[\S 4, \S 5]{Digne_2012}\cite[Corollaire 3.5]{Hee}; we compose here the embedding given in   \cite{Digne_2012} with the Dynkin 
automorphism 
of $W(\tilde C_{n}) $) to make it more convenient for our purpose):     \\

     \begin{eqnarray}
                                            i_n: W(\tilde C_{n}) &\longrightarrow& W(\tilde A_{2n-1})\nonumber\\
					\sigma_{i} &\longmapsto& s_{n-i} s_{n+i}$ ~~~ \text{for} $1\leq i\leq n-1 \nonumber\\
					                 t &\longmapsto& s_{n}  \nonumber\\
                                                        t_{n} &\longmapsto&  a_{2n}   \nonumber 
                                                       \end{eqnarray}
       \begin{figure}[ht]
				\centering
				\begin{tikzpicture}

 \filldraw (-1.5,0) circle (2pt);
  \node at (-1.5,-0.5) {$t$}; 

  \draw (-1.5,-0.07) -- (0, -0.07);
  \draw (-1.5,0.07) -- (0, 0.07);
  
  \filldraw (0,0) circle (2pt);
  \node at (0,-0.5) {$\sigma_{1}$}; 
   
  \draw (0,0) -- (1.5, 0);

  \filldraw (1.5,0) circle (2pt);
  \node at (1.5,-0.5) {$\sigma_{2}$};

  \draw (1.5,0) -- (3, 0);

  \node at (3.5,0) {$\dots$};

  \draw (4,0) -- (5.5, 0);
  
  \filldraw (5.5,0) circle (2pt);
  \node at (5.5,-0.5) {$\sigma_{n-1}$};
 
  \draw (5.5,-0.07) -- (7, -0.07);
  \draw (5.5,0.07) -- (7, 0.07);
  
   \filldraw (7,0) circle (2pt);
  \node at (7,-0.5) {$t_{n}$};

 \filldraw (-1.5,-3) circle (2pt);
  \node at (-2.2,-3) {$s_{n}$}; 

  \draw (-1.5,-3) -- (0, -2);
  
  \filldraw (0,-2) circle (2pt);
  \node at (0,-1.5) {$s_{n-1}$}; 
   
  \draw (0,-2) -- (1.5, -2);

  \filldraw (1.5,-2) circle (2pt);
  \node at (1.5,-1.5) {$s_{n-2}$};

  \draw (1.5,-2) -- (3, -2);

  \node at (3.5,-2) {$\dots$};

  \draw (4,-2) -- (5.5, -2);
  
  \filldraw (5.5,-2) circle (2pt);
  \node at (5.5,-1.5) {$s_1$};
 
\draw (5.5,-2) -- (7, -3);
     
   \filldraw (7,-3) circle (2pt);
  \node at (7.7,-3) {$a_{2n}$};

  \draw (-1.5,-3) -- (0, -4);
  
   \filldraw (0,-4) circle (2pt);
  \node at (0,-4.5) {$s_{n+1}$}; 
   
  \draw (0,-4) -- (1.5, -4);

  \filldraw (1.5,-4) circle (2pt);
  \node at (1.5,-4.5) {$s_{n+2}$};

  \draw (1.5,-4) -- (3, -4);

  \node at (3.5,-4) {$\dots$};

  \draw (4,-4) -- (5.5, -4);
  
  \filldraw (5.5,-4) circle (2pt);
  \node at (5.5,-4.5) {$s_{2n-1}$};
 
\draw (5.5,-4) -- (7, -3);

               \end{tikzpicture}
			\end{figure}
 
We now recall from   \cite{Sadek_Thesis} and  \cite[Lemma 4.1]{Sadek_2015}
  the following monomorphism: 
	\begin{eqnarray}
					  I_n: W(\tilde A_{n-1}) &\longrightarrow& W(\tilde A_{n})\nonumber\\
					s_{i} &\longmapsto& s_{i}$ ~~~ \text{for} $1\leq i\leq n-1 \nonumber\\
					a_{n} &\longmapsto& s_{n} a_{n+1}s_{n}   \nonumber 
				\end{eqnarray}
		Letting $\phi_{2n+1}$ be 
			  the Coxeter automorphism   of  $W(\tilde A_{2n+1})$ given by $$s_1 \mapsto s_2  \mapsto \dots s_{2n}  \mapsto s_{2n+1}  \mapsto a_{2n+2}  \mapsto s_{1} ,$$  the composition   $L_n=\phi_{2n+1}   I_{2n+1}   I_{2n }$ is a  monomorphism.

     \begin{lemma} 
     
     The following diagram commutes  for any $n > 1$: 

\begin{center}    

	\begin{tikzpicture}

			\matrix[matrix of math nodes,row sep=1cm,column sep=1cm]{
			|(A)| W(\tilde A_{2n-1})    & & & &    |(B)| W(\tilde A_{2n+1})  \\
			                              & & & &                      \\								
			|(C)|   W(\tilde{C}_{n} )             & & & &    |(D)|   W(\tilde{C}_{n+1} )      \\
				};

				\path (A) edge[-myhook,line width=0.42pt]  node[above, xshift=-5mm, yshift=-2mm, rotate=0] {\footnotesize $i_{n} $}    (C);
\path (C) edge[-myto,line width=0.42pt]      (A);
				
					\path (B) edge[-myhook,line width=0.42pt]  node[above, xshift=1.5mm, yshift=0mm, rotate=0] {\footnotesize $L_{n}$}      (A);
\path (A)  edge[-myto,line width=0.42pt]    (B);

				\path (C) edge[-myto,line width=0.42pt]   node[below, xshift=1.5mm, yshift=0mm, rotate=0]    {\footnotesize $P_{n}$}      (D);
								
				 \path (B) edge[-myhook,line width=0.42pt]  node[above, xshift=-5mm, yshift=-2mm, rotate=0] {\footnotesize $i_{n+1} $}    (D);
				\path (D) edge[-myto,line width=0.42pt]     (B);

		\end{tikzpicture}	
	\end{center}
     \end{lemma}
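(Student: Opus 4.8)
The plan is to reduce the commutativity to a computation on the generators of $W(\tilde C_n)$. Both composites $L_n\circ i_n$ and $i_{n+1}\circ P_n$ are group homomorphisms $W(\tilde C_n)\to W(\tilde A_{2n+1})$ (indeed $P_n$, $i_n$, $i_{n+1}$ and $L_n$ are all homomorphisms, the last three being monomorphisms), so it suffices to check that they agree on the generating set $\{t=\sigma_0,\sigma_1,\dots,\sigma_{n-1},t_n\}$ of $W(\tilde C_n)$. The commutativity to be proved is thus $L_n\circ i_n = i_{n+1}\circ P_n$.

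First I would make $L_n$ completely explicit on the generators $s_1,\dots,s_{2n-1},a_{2n}$ of $W(\tilde A_{2n-1})$ by composing the three given maps. Tracking $s_i$ through $I_{2n}$ and $I_{2n+1}$ (which fix it) and then through the Coxeter rotation $\phi_{2n+1}$ gives $L_n(s_i)=s_{i+1}$ for $1\le i\le 2n-1$. For the special generator one computes $I_{2n}(a_{2n})=s_{2n}a_{2n+1}s_{2n}$, then $I_{2n+1}$ turns this into $s_{2n}s_{2n+1}a_{2n+2}s_{2n+1}s_{2n}$, and finally $\phi_{2n+1}$ yields
\[ L_n(a_{2n})=s_{2n+1}\,a_{2n+2}\,s_1\,a_{2n+2}\,s_{2n+1}. \]

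With $L_n$ in hand, the generators $t$ and $\sigma_i$ ($1\le i\le n-1$) are pure bookkeeping: since $P_n$ fixes these generators, I only have to observe that the index shift $s_j\mapsto s_{j+1}$ induced by $L_n$ carries the ``$n$-centred'' labelling used by $i_n$ to the ``$(n+1)$-centred'' labelling used by $i_{n+1}$. Concretely $L_n(i_n(t))=L_n(s_n)=s_{n+1}=i_{n+1}(t)$, and $L_n(i_n(\sigma_i))=L_n(s_{n-i}s_{n+i})=s_{n+1-i}s_{n+1+i}=i_{n+1}(\sigma_i)$, where the bounds $1\le n-i$ and $n+i\le 2n-1$ (valid for $1\le i\le n-1$) guarantee that the shift rule $L_n(s_j)=s_{j+1}$ applies to each factor.

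The one case that needs an actual relation is $t_n$, and this is the heart of the argument. On one side $i_{n+1}(P_n(t_n))=i_{n+1}(\sigma_n t_{n+1}\sigma_n)=s_1 s_{2n+1}\,a_{2n+2}\,s_1 s_{2n+1}$, using $i_{n+1}(\sigma_n)=s_1 s_{2n+1}$ and $i_{n+1}(t_{n+1})=a_{2n+2}$; on the other side $L_n(i_n(t_n))=L_n(a_{2n})=s_{2n+1}a_{2n+2}s_1 a_{2n+2}s_{2n+1}$. In $W(\tilde A_{2n+1})$ the three nodes $s_1$, $a_{2n+2}$, $s_{2n+1}$ form a path $s_1-a_{2n+2}-s_{2n+1}$, so $s_1$ and $s_{2n+1}$ commute while each satisfies a braid relation with $a_{2n+2}$. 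Applying the braid relation $a_{2n+2}s_1 a_{2n+2}=s_1 a_{2n+2}s_1$ to the middle of $L_n(a_{2n})$ and then the commutation $s_{2n+1}s_1=s_1 s_{2n+1}$ reduces it to $s_1 s_{2n+1}a_{2n+2}s_1 s_{2n+1}$, matching the other side. The main obstacle is exactly this identity: confirming that the ``doubled'' image of the special $\tilde C$-generator $t_n$ under $P_n$ agrees with the doubled special node produced by $L_n$, which is precisely where the type-$\tilde C$ double bond interacts with the length-two increase of the $\tilde A$ tower.
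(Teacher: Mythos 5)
Your proposal is correct and follows essentially the same route as the paper: verify the identity $L_n\circ i_n=i_{n+1}\circ P_n$ on the Coxeter generators, with the only nontrivial case being $t_n\mapsto a_{2n}$, where both arguments compute $L_n(a_{2n})=s_{2n+1}a_{2n+2}s_1a_{2n+2}s_{2n+1}$ through the composition and then invoke the braid relation between $s_1$ and $a_{2n+2}$ (plus the commutation of $s_1$ with $s_{2n+1}$) to match $i_{n+1}(\sigma_n t_{n+1}\sigma_n)$. The extra bookkeeping you include (the explicit shift rule $L_n(s_j)=s_{j+1}$ and the index bounds) is consistent with, and slightly more detailed than, the paper's computation.
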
                                        
     
     \begin{proof}
        For $1\leq i\leq n-1$ we have 
 $$L_n  i_n(\sigma_{i}) = L_n (s_{n-i} s_{n+i}) = s_{n-i+1} s_{n+i+1} = s_{n+1-i} s_{n+1+i} = i_{n+1} ( \sigma_i) =i_{n+1}   P_n  ( \sigma_{i}) , $$        
      while for $i=0$:   $L_n  i_n(t) =  L_n (s_{n} )= s_{n+1}$ which is exactly $i_{n+1} (t)$, that is $ i_{n+1}  P_n (t)$.
      
      Now consider  $L_n  i_n(t_{n})= L_n(a_{2n})$. We have: 
      $$ 
\begin{aligned}
L_n(a_{2n}) &= 
       \phi_{2n+1} I_{2n+1} (s_{2n}  a_{2n+1}  s_{2n}) 
=   \phi_{2n+1} 
(s_{2n}s_{2n+1} a_{2n+2} s_{2n+1}s_{2n}) \\  &= 
        s_{2n+1}a_{2n+2}s_{1}a_{2n+2}s_{2n+1}=s_{2n+1}s_{1} a_{2n+2} s_{1}s_{2n+1}, 
\end{aligned}$$
 using the braid relation between    $s_{1}$ and $ a_{2n+2}$.      On the other hand we have: 
     $$ i_{n+1}  P_n  (t_n) = i_{n+1} (  \sigma_n  t_{n+1}    \sigma_n ) = s_{n+1-n}s_{n+1+n} a_{2n+2} s_{n+1-n} s_{n+1+n},$$
      which is $s_{2n+1}s_{1} a_{2n+2} s_{1}s_{2n+1}$, and the commutation of the diagram follows.                                        
     \end{proof}

    \begin{corollary}\label{Coxeterinj}  $P_n   : W(\tilde C_{n })  \longrightarrow  W(\tilde C_{n+1})$ is an  injection for any $n > 1$.
 \end{corollary}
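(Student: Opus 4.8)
The plan is to read off the injectivity of $P_n$ from the commuting square of the preceding Lemma, using that the remaining arrows in that square are already known to be injective. The Lemma supplies the relation $i_{n+1} \circ P_n = L_n \circ i_n$, and the whole argument consists in observing that the right-hand composite is injective and that this forces $P_n$ to be injective.

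First I would assemble the injectivity of the auxiliary maps. The embeddings $i_n : W(\tilde C_n) \hookrightarrow W(\tilde A_{2n-1})$ realise each $\tilde C$-type group as a fixed-point subgroup of the ambient $\tilde A$-type group under an involution, as recalled above following \cite{Digne_2012} and \cite{Hee}; in particular $i_n$ is injective. The map $L_n = \phi_{2n+1} I_{2n+1} I_{2n}$ is a monomorphism, being the composite of the two monomorphisms $I_{2n}$, $I_{2n+1}$ from \cite{Sadek_Thesis} and \cite[Lemma 4.1]{Sadek_2015} with the Coxeter automorphism $\phi_{2n+1}$ of $W(\tilde A_{2n+1})$. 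Consequently $L_n \circ i_n$ is a composite of injective homomorphisms and is therefore itself injective.

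To finish I would invoke the elementary fact that whenever a composite $g \circ f$ is injective, the right factor $f$ is injective (this requires nothing about $g$). Applying it to $i_{n+1} \circ P_n = L_n \circ i_n$, whose injectivity was just established, yields that $P_n$ is injective, which is the assertion of the Corollary. I do not expect any genuine obstacle in this last step: all the substance lies in the Lemma, where the square was checked to commute, and in the cited realisation and tower results that guarantee the injectivity of $i_n$ and of the maps feeding $L_n$. The one point worth stating carefully is that we deduce the injectivity of $P_n$ from that of the \emph{composite} $L_n \circ i_n$, so that no separate appeal to the injectivity of $i_{n+1}$ is actually needed.
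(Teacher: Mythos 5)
Your argument is correct and is precisely the one the paper intends: the Corollary is deduced from the commuting square of the preceding Lemma together with the injectivity of $i_n$ and $L_n$, via the elementary fact that injectivity of $i_{n+1}\circ P_n = L_n\circ i_n$ forces injectivity of $P_n$. No substantive difference from the paper's (implicit) proof.
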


\section{The tower of  $\tilde C$-type Hecke algebras}\label{HC}      

Let for the moment $K$ be an arbitrary commutative ring with identity; 
 we mean by algebra in what follows $K$-algebra. We recall \cite[Ch. IV \S 2 Ex. 23]{Bourbaki_1981} that for  a given Coxeter graph $\Gamma$ and a corresponding Coxeter system $(W,S)$,   there is a unique algebra structure on the free $K$-module with basis $ \left\{  g_w \ | \ w \in W(\Gamma) \right\} $ satisfying, for a given $q \in K$: 
\begin{equation*}\label{definingrelations} 	
	 \begin{aligned}
		 &g_{s} g_{w} =g_{sw}     ~~~~~~~~~~~~~~~~~~~~  \text{ for } s \notin \mathscr{L} (w) , \\
		 &g_{s} g_{w} =qg_{sw}+ (q-1)g_w ~~  \text{ for } s \in \mathscr{L} (w). \\
			  \end{aligned}    \qquad 
		\end{equation*}
We denote this algebra by $H\Gamma(q)$  and call it the the $\Gamma$-type Hecke algebra.   This algebra has a presentation ({\it loc.cit.}) given by generators $ \left\{  g_s \ | \   s \in S \right\} $ and relations 
$$
	 \begin{aligned}
		  g_{s}^2   &=q + (q-1)g_s ~~~~~~~~ \   \text {for } s \in S,  \\
		  (g_{s} g_{t})^r &=(g_{t} g_{s})^r    ~~~~~~~~~~~~~~~~~   \! \text{ for } s, t \in S 
\text{ such that } st \text{ has order }  2r , \\
  (g_{s} g_{t})^r g_s&=(g_{t} g_{s})^r  g_t   ~~~~~~~~~~~~~~  \text{ for } s, t \in S 
\text{ such that } st \text{ has order }  2r+1 .  
			  \end{aligned}   
$$
We   assume in what follows that $q$ is invertible in $K$. In this case  the first defining relation above implies that $ g_{s}$, for $s \in S$, is invertible 
with inverse 
\begin{equation}\label{inverse} 
	g^{-1}_s = 		 \frac{1}{q} \;  	g_s +  \frac{q-1}{q}.
 \end{equation}

		We consider the  $\tilde{C_n}$-type (resp. $B_n$-type)   Hecke algebra  $H\tilde{ C}_{n} (q) $ (resp. $HB_{n}(q)$) corresponding to the Coxeter group  $W(\tilde{C}_{n} )$ (resp. $W(B_{n})$), for $n\ge 2$. 
Regarding $W(B_{n})$ as a parabolic subgroup of $W(\tilde{C}_{n} )$ as in the previous paragraph, we view 
  $HB_n(q) $ as the  subalgebra of  $H\tilde{C}_{n} (q) $ generated by $\{g_{\sigma_{0}},g_{\sigma_{1}} , \dots g_{\sigma_{n-1}} \} $.  

Since $W(B_{n})$ is a parabolic subgroup of $W(B_{n+1})$ we can also see $HB_n(q) $ as a subalgebra 
of $HB_{n+1}(q)$, we thus  have the following  tower of   Hecke algebras: 	
\begin{eqnarray}
				K  ~\subset HB_2(q) ~~\cdots \subset HB_n(q) ~~\subset  HB_{n+1}(q) ~~\subset \cdots\nonumber 
							\end{eqnarray}

	The aim of this section is to define a similar tower of  $\tilde{C}$-type Hecke algebras, despite the fact that 
$W(\tilde{C}_{n} )$ is not a parabolic subgroup of $W(\tilde{C}_{n+1} )$. Let us write 
$\{e_{\sigma_{0}},  \dots ,e_{\sigma_{n-1}}, e_{t_n} \} $ for the generators of  $H\tilde{C}_n (q)$ and 
$\{g_{\sigma_{0}} , \dots , g_{\sigma_{n-1}}, g_{\sigma_{n }},g_{t_{n+1}} \} $ for those of 
$H\tilde{C}_{n+1} (q)$.   It is easy to check that 
$\{g_{\sigma_{0}} , \dots , g_{\sigma_{n-1}}, g_{\sigma_n} g_{t_{n+1}}g_{\sigma_n}^{-1}\} $ satisfies the defining relations for $H\tilde{C}_n (q)$,   we thus get the following morphism of algebras:

		\begin{equation}\label{defFn}
\begin{aligned}
					  R_n: H\tilde{C}_n (q)  &\longrightarrow   H\tilde{C}_{n+1} (q) \\
					e_{\sigma_i} &\longmapsto  g_{\sigma_i}  ~~~ ~~~ ~~~ \text{for }  0\leq i\leq n-1  \\
					e_{t_n} &\longmapsto  g_{\sigma_n} g_{t_{n+1}}g_{\sigma_n}^{-1}  .
\end{aligned}		
\end{equation}	 
 
\medskip
	
On the other hand, the group injection   $P_n:    W(\tilde C_{n })  \longrightarrow  W(\tilde C_{n+1})$ of Corollary \ref{Coxeterinj} extends to the group algebras, providing the following algebra monomorphism:
	$$\begin{aligned}
				P_n :K[W(\tilde{C}_{n } )] &\longrightarrow  K[W (\tilde{C}_{n+1} )]    \\
				\sigma_{i}&\longmapsto  \sigma_{i} \text{  for }    0\leq i\leq n-1   \\
				t_{n}&\longmapsto  \sigma_{n} t_{n+1} \sigma_{n}.    
\end{aligned}			$$

We let now  $K $   be the ring   $\mathbf Q[q,q^{-1}]$ of Laurent polynomials with rational coefficients and will prove:  
 		
	\begin{proposition}\label{diagcomm}
				The following diagram,   where $M_{n}$ and $M_{n+1}$ are the maps coming from specializing $q$ to 1, is commutative.	
	\begin{center}    

		\begin{tikzpicture}\label{FnIn}

			\matrix[matrix of math nodes,row sep=1cm,column sep=1cm]{
			|(A)|  H\tilde{C}_{n} (q)    & & & &    |(B)|  H\tilde{C}_{n+1} (q)  \\
			                              & & & &                      \\								
			|(C)| K[W(\tilde{C}_{n} )] )             & & & &    |(D)| K[W(\tilde{C}_{n+1} )]     \\
				};

				\path (A) edge[-myto,line width=0.42pt]  node[above, xshift=-5mm, yshift=-2mm, rotate=0] {\footnotesize $M_{n} $}    (C);
				
				 				\path (A) edge[-myto,line width=0.42pt] node[above, xshift=1.5mm, yshift=0mm, rotate=0] {\footnotesize $R_{n}$}      (B);
				
				\path (D) edge[-myhook,line width=0.42pt] node[below, xshift=1.5mm, yshift=0mm, rotate=0] {\footnotesize $P_n $}    (C);
				\path (C) edge[-myto,line width=0.42pt]      (D);

				\path (B) edge[-myto,line width=0.42pt]   node[above, xshift=5mm, , yshift=-2mm, rotate=0] {\footnotesize $M_{n+1}$}   (D);

		\end{tikzpicture}	

	\end{center}	
		\end{proposition}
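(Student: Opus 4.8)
The plan is to regard the square as a diagram of ring homomorphisms and to reduce commutativity to a check on a generating set. Recall that $M_n$ and $M_{n+1}$ are the specialization maps: each is the homomorphism sending $q\mapsto 1$ and $g_w\mapsto w$ for every group element $w$. This is well defined because at $q=1$ the quadratic relation $g_s^2=q+(q-1)g_s$ degenerates to $g_s^2=1$ and the multiplication rule $g_sg_w=qg_{sw}+(q-1)g_w$ collapses to $g_sg_w=g_{sw}$, so the structure constants become those of the group algebra. Note that $M_n$ and $M_{n+1}$ are $\mathbf Q$-algebra homomorphisms but \emph{not} $K$-algebra homomorphisms, since they act nontrivially on the scalar $q$; their images land in the subring $\mathbf Q[W]\subseteq K[W]$. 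Since $H\tilde C_n(q)$ is generated as a ring by $q^{\pm 1}$ together with $e_{\sigma_0},\dots,e_{\sigma_{n-1}},e_{t_n}$, it suffices to show that the two ring homomorphisms $M_{n+1}\circ R_n$ and $P_n\circ M_n$ (both maps $H\tilde C_n(q)\to K[W(\tilde C_{n+1})]$) agree on these elements.

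First I would dispatch the scalars and the generators $\sigma_i$. On $q$ we have $P_n(M_n(q))=P_n(1)=1$, while $M_{n+1}(R_n(q))=M_{n+1}(q)=1$ because $R_n$ is $K$-linear; so the two composites agree on all of $K$. For $0\le i\le n-1$, one computes $P_n(M_n(e_{\sigma_i}))=P_n(\sigma_i)=\sigma_i$, whereas by \eqref{defFn} we get $M_{n+1}(R_n(e_{\sigma_i}))=M_{n+1}(g_{\sigma_i})=\sigma_i$; these coincide.

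The only case requiring a short computation is $e_{t_n}$, and it is where the conjugation in the definition of $R_n$ must be seen to specialize correctly. On one side, the group-algebra extension of $P_n$ gives $P_n(M_n(e_{t_n}))=P_n(t_n)=\sigma_n t_{n+1}\sigma_n$. On the other side, using \eqref{defFn} and that $M_{n+1}$ is a ring homomorphism,
\[
M_{n+1}(R_n(e_{t_n}))=M_{n+1}\bigl(g_{\sigma_n}g_{t_{n+1}}g_{\sigma_n}^{-1}\bigr)=M_{n+1}(g_{\sigma_n})\,M_{n+1}(g_{t_{n+1}})\,M_{n+1}(g_{\sigma_n})^{-1}=\sigma_n\,t_{n+1}\,\sigma_n^{-1};
\]
equivalently, one may specialize $g_{\sigma_n}^{-1}$ directly via \eqref{inverse}, which at $q=1$ reduces to $g_{\sigma_n}$ and hence maps to $\sigma_n$. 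Since $\sigma_n^2=1$ in $W(\tilde C_{n+1})$ we have $\sigma_n^{-1}=\sigma_n$, so $M_{n+1}(R_n(e_{t_n}))=\sigma_n t_{n+1}\sigma_n$, matching the first composite.

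Having verified agreement on the scalars and on every algebra generator, the two ring homomorphisms $M_{n+1}\circ R_n$ and $P_n\circ M_n$ are equal, which is precisely the asserted commutativity. I do not anticipate any genuine obstacle: the entire content is the bookkeeping of the $q=1$ specialization, the single point to watch being that $\sigma_n^{-1}=\sigma_n$, so that the conjugate $g_{\sigma_n}g_{t_{n+1}}g_{\sigma_n}^{-1}$ specializes to $\sigma_n t_{n+1}\sigma_n$ in accordance with the definition of $P_n$ on $t_n$.
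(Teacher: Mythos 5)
Your proof is correct, but it takes a genuinely different route from the paper's. The paper proves Lemma \ref{3_3_1}, a quantitative statement established by induction on $l(w)$: for every basis element $e_w$ one has $R_n(e_w)=A\,g_{P_n(w)}+\frac{q-1}{q^r}\sum_x\lambda_x g_x$ with $A\in q^{\mathbb Z}$, so that evaluating at $q=1$ kills the correction term and yields $M_{n+1}(R_n(e_w))=P_n(w)$ basis element by basis element. You instead observe that all four arrows are (unital) ring homomorphisms --- with the correct caveat that $M_n$, $M_{n+1}$ are only $\mathbf Q$-linear, not $K$-linear, a point the paper leaves implicit --- and that two ring homomorphisms out of $H\tilde C_n(q)$ agree once they agree on $q^{\pm1}$ and on $e_{\sigma_0},\dots,e_{\sigma_{n-1}},e_{t_n}$; the only nontrivial check is $e_{t_n}$, where $g_{\sigma_n}g_{t_{n+1}}g_{\sigma_n}^{-1}$ specializes to $\sigma_n t_{n+1}\sigma_n$ since $\sigma_n^{-1}=\sigma_n$ (or via \eqref{inverse} at $q=1$). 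Your argument is considerably shorter and avoids the induction entirely. What the paper's longer route buys is the explicit leading-term structure of $R_n(e_w)$ --- the coefficient $A$ and the divisibility of the remainder by $q-1$ --- which foreshadows the leading-term analysis used later for the Temperley--Lieb tower (Lemma \ref{formula}, Proposition \ref{formula2}); however, the subsequent proof of Proposition \ref{pr_3_3_3} invokes only the commutativity of the square, so your argument would suffice for everything the paper actually uses.
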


			\begin{proof}	We will first prove the following Lemma, in which we simplify the notation by setting 
$R_n=R$ and $P_n=P$:

		\begin{lemma} \label{3_3_1}
				Let $w$ be any element in $W(\tilde{C}_{n } )$. Then:
				\begin{eqnarray}
					R(e_{w}) =Ag_{P(w)}+\frac{q-1}{q^r}\sum\limits_{x\in W(\tilde{C}_{n+1})} \lambda_{x}g_{x}, \nonumber
				\end{eqnarray}
				where $A $ belongs to $ q^\mathds Z$, the $ \lambda_{x}$ are polynomials in $q$  over $\mathds{Q} $ and $r  $ is a non-negative integer. 
				
			\end{lemma}

			\begin{proof}
				Suppose $l(w)=1$. If $w=e_{\sigma_i} $  for $0\leq i\leq n-1$, then 
			$
					R(e_{w})=R(e_{\sigma_i})=g_{\sigma_i} 
		$, 
while for $w= e_{t_n}$ we use  (\ref{inverse}) and get:				 
				\begin{eqnarray}
				 R(e_{t_n}) = g_{\sigma_{n}} g_{t_{n+1}} g_{\sigma_{n}}^{-1} = \frac{1}{q} g_{\sigma_{n}} g_{t_{n+1}} g_{\sigma_{n}} + \frac{1-q}{q} g_{\sigma_{n}} g_{t_{n+1}}
					= \frac{1}{q} g_{\sigma_{n} t_{n+1} \sigma_{n}} + \frac{1-q}{q} g_{\sigma_{n} t_{n+1}} \nonumber
				\end{eqnarray}
	as announced. 
			  
				Now take $w$  with $  l(w)\ge 2$ and suppose that the statement is true for any element of length $h$ where $h < l(w)$. If $w\in W(B_{n})$, then $P(w)=w $ and $R(e_{w})=g_{w}$,  hence our statement holds. Otherwise  $w$ can be written as $w=ut_{n}v$ where $v \in   W(B_{n})$  and   $l(w)=l(u)+l(v)+1$ and we have:
				\begin{eqnarray}
					R(e_{w})	=	R(e_{u})	R(e_{t_{n}})	R(e_{v})	=	R(e_{u})	g_{\sigma_{n}}	g_{t_{n+1}}	g^{-1}_{\sigma_{n}}	g_{v}. \nonumber
				\end{eqnarray}
Using (\ref{inverse}) and the induction hypothesis for  $e_{u}$ we get 
 $A \in  q^\mathds Z$,  polynomials  $\mu_{y}$ ($y \in W(\tilde{C}_{n+1})$)   in $\mathds{Q}[q]$ and $r  \in \mathds{Z}^+$ such that: 
\begin{eqnarray}
					R(e_{w})&=& \frac{1}{q} \bigg(Ag_{P(u)}+\frac{q-1}{q^r}\sum\limits_{y\in W(\tilde{C}_{n+1})} \mu_{y}g_{y}\bigg)g_{\sigma_{n}}g_{t_{n+1}}g_{\sigma_{n}}g_{v}   \nonumber\\ 
					& & \qquad  + \frac{q-1}{q}\bigg(Ag_{P(u)}+\frac{q-1}{q^r}\sum\limits_{y\in W(\tilde{C}_{n+1})}\mu_{y}g_{y}\bigg)g_{\sigma_{n}}g_{t_{n+1}}g_{v} \nonumber\\ 
&=& \frac{1}{q} Ag_{P(u)}g_{\sigma_{n}}g_{t_{n+1}}g_{\sigma_{n}}g_{v} + \frac{q-1}{q^{r'}}\sum\limits_{y\in W(\tilde{C}_{n+1})}\mu'_{y}g_{y} .\nonumber
				\end{eqnarray}
  for some   $\mu'_{y}$ ($y \in W(\tilde{C}_{n+1})$)   in $\mathds{Q}[q]$, finitely many non zero,  and some $r'\in \mathds{Z}^+$.   				
			 
				We can write  $g_{P(u)}g_{\sigma_{n}}=a(q-1)g_{P(u)}+b g_{P(u)\sigma_{n}}$, with 
$(a,b)= (0,1)$ or $(a,b)= (1,q)$.  Then we keep computing: 
				\begin{eqnarray}
					g_{P(u)}g_{\sigma_{n}}g_{t_{n+1}}&=&a(q-1)g_{P(u)}g_{t_{n+1}}+b g_{P(u)\sigma_{n}}g_{t_{n+1}} \nonumber\\
					&=& a(q-1) g_{P(u)} g_{t_{n+1}} +b \big(a'(q-1)g_{P(u)\sigma_{n}}+b' g_{P(u)\sigma_{n}t_{n+1}}\big)\nonumber\\
					&=&(q-1)\big(ag_{P(u)}g_{t_{n+1}}+a'bg_{P(u)\sigma_{n}}\big)+ bb'g_{P(u)\sigma_{n}t_{n+1}}  \nonumber
				\end{eqnarray}
with again $(a',b')= (0,1)$ or $  (1,q)$. In the same way we multiply on the right   by $g_{\sigma_{n}}$  and we see that there exist suitable polynomials $\mu''_{y}$   in $\mathds{Q}[q]$ such that: 

				\begin{eqnarray}
					g_{P(u)} g_{\sigma_{n}} g_{t_{n+1}} g_{\sigma_{n}} = (q-1) ( \sum\limits_{z\in W(\tilde{C}_{n+1})}\mu''_{z}g_{z} ) + q^{s} g_{P(u)\sigma_{ n } t_{n+1}\sigma_{n}} \nonumber
				\end{eqnarray} 				
where $0 \leq s \leq 3 $. But $ g_{P(u)\sigma_{n}t_{n+1}\sigma_{n}}=g_{P(u)P(t_{n})}=g_{P(ut_{n})}$.	 Moreover, since $l(w) = l(u)+l(t_{n})+l(v)$ we see directly that  $g_{P(u)\sigma_{n}t_{n+1}\sigma_{n}} g_v= 	g_{P(u\sigma_{n})v}=g_{P(u\sigma_{n}v)}= g_{P(w)}$. Finally: 								
	$$ \begin{aligned}
					R(e_{w})&= \frac{1}{q} A \left[(q-1)( \sum\limits_{z\in W(\tilde{C}_{n+1})}\mu''_{z}g_{z} g_v)+ q^{s}g_{P(w)}\right] + \frac{q-1}{q^{r'}}\sum\limits_{y\in W(\tilde{C}_{n+1})}\mu'_{y}g_{y}  \\ 
				&=	  Aq^{s-1}g_{P(w)}+(q-1)  \left[\frac{A}{q} ( \sum\limits_{z\in W(\tilde{C}_{n+1})}\mu''_{z}g_{z} g_v ) + \frac{1}{q^{r'}}\sum\limits_{y\in W(\tilde{C}_{n+1})}\mu'_{y}g_{y}\right] .\nonumber
				\end{aligned}	$$		
		The lemma follows.
		\end{proof}	
		
	We go back to the proof of the Proposition. 		
				The diagram commutes  if and only if, for each $w$ in $W(\tilde{C}_{n} )$, we have:
				\begin{eqnarray}
					P_n\big(M_{n}(e_{w})\big)=M_{n+1}\big(R_n(e_{w})\big). \nonumber
				\end{eqnarray}				
We have $P_n\big(M_{n}(e_{w})\big)=P_n(w)$  while, by specializing Lemma \ref{3_3_1} at $q=1$, we see that $M_{n+1}(R_n(e_{w}))$ is equal to 
				 $A (1) P_n(w)  $, that is,  $P_n(w)$, whence the result.  
						\end{proof}

\begin{proposition}\label{pr_3_3_3}	 
	Let $K= \mathbf Q[q,q^{-1}]$. 			The homomorphism   of algebras  $$R_n:  H\tilde{C}_n (q)   \longrightarrow  H\tilde{C}_{n+1} (q)$$ defined in (\ref{defFn}) is an injection. 		
			\end{proposition}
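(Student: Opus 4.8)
The plan is to deduce the injectivity of $R_n$ from the commutative square of Proposition \ref{diagcomm} together with an infinite-descent argument on divisibility by $(q-1)$. Recall that $H\tilde{C}_n(q)$ is a free $K$-module on the basis $\{e_w : w \in W(\tilde{C}_n)\}$ and that $R_n$, being a morphism of $K$-algebras (see (\ref{defFn})), is $K$-linear; in particular $K = \mathbf{Q}[q,q^{-1}]$ is an integral domain and $H\tilde{C}_{n+1}(q)$ is torsion-free over it. So it suffices to show $\ker R_n = 0$.

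First I would record the two auxiliary facts that feed the descent. On the one hand, the map $P_n : K[W(\tilde{C}_n)] \to K[W(\tilde{C}_{n+1})]$ is injective: it is the $K$-linear extension of the group monomorphism of Corollary \ref{Coxeterinj}, and an injective map of groups sends the basis $W(\tilde{C}_n)$ to a set of pairwise distinct, hence $K$-linearly independent, group elements. On the other hand, the specialization $M_n$ sends $\sum_w c_w e_w$ to $\sum_w c_w(1)\, w$, so its kernel consists exactly of those $\xi = \sum_w c_w e_w$ with $c_w(1)=0$ for every $w$; since a Laurent polynomial vanishes at $q=1$ if and only if it is divisible by $(q-1)$, this says $\ker M_n = (q-1)\, H\tilde{C}_n(q)$.

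Now let $\xi \in \ker R_n$. Applying Proposition \ref{diagcomm} gives $P_n(M_n(\xi)) = M_{n+1}(R_n(\xi)) = 0$, and the injectivity of $P_n$ forces $M_n(\xi)=0$, i.e. $\xi = (q-1)\,\xi_1$ for some $\xi_1 \in H\tilde{C}_n(q)$. Since $R_n$ is $K$-linear, $0 = R_n(\xi) = (q-1)R_n(\xi_1)$, and torsion-freeness over the domain $K$ (with $q-1 \ne 0$) yields $R_n(\xi_1)=0$; thus $\xi_1 \in \ker R_n$ as well. Iterating, one obtains $\xi \in (q-1)^k H\tilde{C}_n(q)$ for every $k \geq 1$. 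Writing $\xi = \sum_w c_w e_w$ in the basis, each coefficient $c_w$ is then divisible by $(q-1)^k$ for all $k$; as a nonzero Laurent polynomial has only finite order of vanishing at $q=1$, every $c_w$ must be $0$, hence $\xi = 0$.

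The substance of the argument is really Proposition \ref{diagcomm} (ultimately Lemma \ref{3_3_1}), which I would regard as the main step; granting it, the only points requiring care are the exact identification $\ker M_n = (q-1)H\tilde{C}_n(q)$ and the fact that $\bigcap_{k\ge 1} (q-1)^k K = 0$, both elementary over $\mathbf{Q}[q,q^{-1}]$. As an alternative that bypasses the square, I could argue directly from Lemma \ref{3_3_1}: since $P_n$ is injective, the leading terms $A_w\, g_{P(w)}$, with $A_w \in q^{\mathds{Z}}$ a unit, attach distinct basis elements to distinct $w$, so comparing the coefficient of $g_{P(w_0)}$ in $R_n\!\left(\sum_w c_w e_w\right)=0$ gives $c_{w_0}A_{w_0} \in (q-1)K$, whence $(q-1)\mid c_{w_0}$; the same $(q-1)$-adic descent then finishes the proof.
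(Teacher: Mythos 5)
Your argument is correct and is essentially the paper's own proof: both rest on the commutative square of Proposition \ref{diagcomm} (hence on Lemma \ref{3_3_1}), the injectivity of $P_n$ on group algebras, and the observation that vanishing at $q=1$ means divisibility by $q-1$ in $\mathbf Q[q,q^{-1}]$. The only difference is cosmetic: the paper normalizes the coefficients of a putative dependence relation to have no common factor and derives a contradiction from $(q-1)$ dividing all of them, whereas you run the equivalent $(q-1)$-adic infinite descent using $\ker M_n=(q-1)H\tilde{C}_n(q)$ and $\bigcap_{k}(q-1)^kK=0$.
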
	
	
			\begin{proof}		
				We will make use of the fact that the   diagram  in Proposition \ref{diagcomm} commutes and prove that the images of the basis elements of $H\tilde{C}_{n } (q)$ are linearly independent in $H\tilde{C}_{n+1} (q)$. 		
				Suppose  that there exists a finite subset $Z$ of $W(\tilde{C}_{n }) $ and non-zero  polynomials  $\lambda_{w}$, $w \in Z$,  in $\mathds{Q}[q,q^{-1}]$,  with $\sum\limits_{w\in Z }\lambda_{w}R_n(e_{w})=0$. 
				We can as well assume that the  $\lambda_{w}$ are in 
				$\mathds{Q}[q  ]$ (by multiplying by some power of $q$) and that they have no common factor (by factoring out common factors if any).  We do so.

	Since the diagram in Proposition \ref{diagcomm} commutes, we have: 
				\begin{eqnarray}
					M_{n+1}\big(R_n(e_{w})\big)=P_n\big(M_{n}(e_{w})\big)=P_n(w) .\nonumber
				\end{eqnarray}	
				We now apply $M_{n+1}$ to the dependence relation  $\sum\limits_{w\in Z }\lambda_{w}R_n(e_{w})=0$ to get:
$$
					\sum\limits_{w\in Z}\lambda_{w}(1)P_n(w)=P_n(\sum\limits_{w\in Z}\lambda_{w}(1)w)=0, $$  
	thus $\sum\limits_{w\in Z}\lambda_{w}(1)w=0$, 			which implies that  $\lambda_{w}(1)=0$  for every $w \in Z$.  	
				This last fact means that the polynomial $(q-1)$ divides every $\lambda_{w}$,   which contradicts our hypothesis, hence $R_n$ is an injection.			 	\end{proof}
			
			\begin{remark}\label{3_3_4} {\rm 
		Whenever we know that  $R_{n}$ is injective, we don't need anymore to distinguish between generators of 
$H\tilde{C}_{n } (q)$  and  $H\tilde{C}_{n+1} (q)$: we 
can denote the generators of $H\tilde{C}_{n } (q)$ by $g_t,g_{\sigma_{1}}, \dots ,g_{\sigma_{n-1}},g_{t_{n}} $ and the generators of $H\tilde{C}_{n+1} (q)$ by  $g_ t, g_{\sigma_{1}}, \dots, g_{\sigma_{n}},g_{t_{n+1}} $. }
		\end{remark}	
		
\section{A normal form for $\tilde C$-type fully commutative elements}\label{notations}

				In a given Coxeter group $W(\Gamma)$, we know that from a given reduced expression of an element $w$ we can arrive to any other reduced expression of $w$ only by applying braid relations \cite[\S 1.5 Proposition 5]{Bourbaki_1981}.  Among these relations there are commutation relations: those 
				that  correspond  to   generators $t$ and $s$ such that $st$ has order $2$.  \\

	             \begin{definition}
			Elements for which one can pass from any reduced expression to any other one only by applying commutation relations are called {\rm fully commutative elements}. We denote  by $W^{c}(\Gamma)$, or simply $W^{c}$,  the set of fully commutative elements in $W= W(\Gamma)$. \\
		    \end{definition}

We consider the set  $W^{c}(B_{n+1})$  of  fully commutative elements in $W(B_{n+1})$  and recall 
the description given by Stembridge in \cite{St}. Using the notation there and    the convention $t= \sigma_0$ we let:  		
$$ \begin{aligned}  \      [ i,j ]    
&= \sigma_i \sigma_{i+1} \dots \sigma_j   \    \text{ for } 0\le i\le j \le n \    \text{ and }  \    [ n+1, n]   = 1, 
\\
[ - i,j ]  &= \sigma_i \sigma_{i-1} \dots  \sigma_1 t\sigma_{1}  \dots \sigma_{j-1}  \sigma_j  \    \text{ for } 1\le i\le j \le n\    \text{ and }  \    [ 0, -1]   = 1 .
\end{aligned}  
$$

\begin{theorem}\label{1_2}{\rm \cite[Theorem 5.1]{St}}
    $W^c(B_{n+1})$ is the set of elements of the following form: 
 \begin{equation}\label{Stembridge}
[ l_1, g_1 ] 	[  l_2, g_2 ]  \dots  [l_r , g_r ]	
 \end{equation}
with $n\ge g_1 > \dots > g_r \ge 0$ and 
$|l_t| \le g_t$ for $1\le t \le r$, such that  either

\begin{enumerate}
\item $ l_1 > \dots > l_s  > l_{s+1} = \dots = l_r = 0  $ for some  $s \le r$, or 
\item $ l_1 > \dots > l_{r-1}  > -l_r > 0 $. \\
\end{enumerate}
 
 
\end{theorem}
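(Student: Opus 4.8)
The plan is to prove the two inclusions separately, using throughout the local criterion (also due to Stembridge) that $w \in W(B_{n+1})$ is fully commutative if and only if no reduced word for $w$ contains a contiguous factor equal to an alternating braid word $\underbrace{sts\cdots}_{m(s,t)}$ of length $m(s,t)$, for some $s\neq t$ with $3\le m(s,t)<\infty$; in type $B_{n+1}$ these are exactly the factors $\sigma_i\sigma_{i+1}\sigma_i$ with $1\le i\le n-1$ and the factor $\sigma_0\sigma_1\sigma_0\sigma_1$. It is convenient to fix the realization of $W(B_{n+1})$ as the group of signed permutations of $\{1,\dots,n+1\}$, with $t=\sigma_0$ the sign change on the first coordinate and $\sigma_i=(i,i+1)$, since there the length function and the absence of cancellation are transparent.

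First I would show that every word of the shape (\ref{Stembridge}) satisfying the stated hypotheses is reduced and fully commutative. For reducedness one checks $l([i,j])=j-i+1$ and $l([-i,j])=i+j+1$, and that the strict inequalities $g_1>\dots>g_r$ together with $|l_t|\le g_t$ prevent any two occurrences of a generator from becoming adjacent after commutations, so that lengths add along the concatenation; I would confirm this by computing the image signed permutation and comparing inversion counts. For full commutativity, realizing the word as a heap, the strictly decreasing ceilings $g_t$ insulate successive occurrences of any $\sigma_k$ ($k\ge 1$) by a higher-indexed generator coming from the earlier, taller block, which forbids an $\sigma_i\sigma_{i+1}\sigma_i$ factor; at the doubly bonded end the occurrences of $t$ are governed precisely by conditions (1) and (2), which is exactly what is needed to exclude an $\sigma_0\sigma_1\sigma_0\sigma_1$ factor.

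For the reverse inclusion I would induct on $n$, using that $W(B_n)=\langle\sigma_0,\dots,\sigma_{n-1}\rangle$ is a maximal parabolic subgroup of $W(B_{n+1})$. If $\sigma_n\notin\mathrm{supp}(w)$ then $w\in W(B_n)$ and induction applies. Otherwise I decompose $w=c\,w'$ with $w'\in W(B_n)$ and $c$ the minimal-length representative of the left coset $c\,W(B_n)$; the index $[W(B_{n+1}):W(B_n)]=2(n+1)$ matches exactly the number of elements $[l,n]$ with $-n\le l\le n$ (together with $[n+1,n]=1$), and these are precisely the minimal coset representatives, so that $c=[l_1,n]$, $l([l_1,n])+l(w')=l(w)$, and $g_1:=n$. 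One checks that $w'$ is again fully commutative (any braid factor in a reduced word of $w'$ would, after prepending a reduced word of $c$, give one in $w$) and has top index $<n$, so by induction $w'$ has the form (\ref{Stembridge}) with $g_2>\dots>g_r$, all $<n=g_1$. It then remains to see that full commutativity of the whole of $w$, and not merely of the two factors, forces the decrease $l_1>l_2>\cdots$ of the positive parts and the dichotomy between (1) and (2): a violation such as $l_1\le l_2$ would reinstate a factor $\sigma_i\sigma_{i+1}\sigma_i$ at the junction of $c$ and $w'$ (for instance $[0,2][1,1]=t\sigma_1\sigma_2\sigma_1$ is not fully commutative, as it contains $\sigma_1\sigma_2\sigma_1$), while the way $t$ is permitted to recur is precisely what separates case (1) from case (2).

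The step I expect to be the main obstacle is this last junction analysis at the doubly bonded end. The clean ``a leaf occurs at most once'' phenomenon that controls the order-$3$ edges fails across the order-$4$ edge $\{\sigma_0,\sigma_1\}$ --- for example $t$ occurs $r$ times in $[0,g_1]\cdots[0,g_r]$ --- so the recurrences of $t$ need a separate and more delicate argument, and it is exactly this analysis that produces the split into conditions (1) and (2). Finally, to justify calling (\ref{Stembridge}) a genuine normal form I would record uniqueness of the data $(l_t,g_t)$, which is automatic from the coset decomposition above: $g_1$ is read off as the top of the support, $l_1$ is determined by the coset (equivalently by $w^{-1}(n+1)$), and the remaining data is recovered by recursion on $w'$.
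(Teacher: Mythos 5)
First, a point of order: the paper does not prove this statement at all --- it is quoted verbatim from Stembridge \cite[Theorem 5.1]{St} and used as an input, so there is no internal proof against which to measure your attempt. Judged on its own terms, your outline follows the natural route one would reconstruct: reducedness and braid-avoidance for words of shape (\ref{Stembridge}) in one direction, and in the other an induction on $n$ via the coset decomposition $w=[l_1,n]\,w'$ with $w'\in W(B_n)$. The bookkeeping there is correct: the $2(n+1)$ elements $[l,n]$, $-n\le l\le n+1$, are indeed the minimal coset representatives, lengths add, $w'$ inherits full commutativity, and $g_1=n$ whenever $\sigma_n$ lies in the support.

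Nevertheless there is a genuine gap, and you have named it yourself: the junction analysis that forces $l_1>l_2>\cdots$ and produces the dichotomy between conditions (1) and (2) is the entire nontrivial content of the theorem, and your proposal defers it rather than carrying it out. What is missing is the argument that any violation of (1)/(2) yields, after commutations, a factor $\sigma_i\sigma_{i+1}\sigma_i$ or $\sigma_0\sigma_1\sigma_0\sigma_1$ in some reduced word (your single example $[0,2][1,1]$ illustrates but does not establish this), and conversely that (1)/(2) exclude all such factors --- in particular the length-four factor across the order-$4$ bond, where, as you observe, $t$ may legitimately recur many times, e.g.\ in $[0,g_1]\cdots[0,g_r]$. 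The forward direction is likewise only gestured at (``the strictly decreasing ceilings insulate successive occurrences''); it needs an actual heap argument, or an explicit check that between two consecutive occurrences of $\sigma_k$ with $k\ge 1$ there sits exactly one occurrence of $\sigma_{k+1}$, together with the separate analysis at the $\{\sigma_0,\sigma_1\}$ end. Until that is written down the proposal is a plan, not a proof; if you do not wish to reproduce Stembridge's argument in full, the honest course is to do what the paper does and simply cite \cite[Theorem 5.1]{St}.
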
 
	For example:  $W^{c}(B_{2}) =   \left\{ 1, t, \sigma_{1}, t \sigma_{1}, \sigma_{1} t ,   t\sigma_{1}t, \sigma_{1} t \sigma_{1}    \right\} $.    We remark that if $r>1$, then $[ l_2, g_2 ] \dots [ l_s, g_s ]$ in (\ref{Stembridge})  above belongs to $W^c( B_ {n})$. 
We also notice that  if $ \sigma_{n} $ appears in form (\ref{Stembridge}) above, then  either it appears   only once  and 
we have $n = g_1 \ne -l_1$, 
 	or it appears    exactly twice and we have $n = g_1 =-l_1$.  \\

 	\begin{definition}
				An element $u$ in $W^{c}(B_{n+1})$ is called 
				{\rm extremal}  if   $ \sigma_{n} $  appears in a (any) reduced expression for $u$. In this case $u$ can be written in  one of the two following   forms: 	
 $$\begin{aligned}   
&\text{either } \quad 				 [-n, n] 
	\\				 
	&\text{or } \quad 			 [ l_1,  n ][ l_2, g_2 ] \dots [ l_s, g_s ]   
 \end{aligned} $$ 
with   $n > g_2 > \dots > g_s \ge 0$, 
$|l_t| \le g_t$ for $1\le t \le s$, $l_1 \ne -n$  and one of the conditions (1) and (2) of Theorem  \ref{1_2}.
			\end{definition}

In the group $W(\tilde{C}_{n+1}) $, the only braid relation involving $t_{n+1}$ (apart from commutation relations) is 
$$t_{n+1}\sigma_n t_{n+1}\sigma_n = \sigma_n t_{n+1}\sigma_n t_{n+1}$$  where  the number of occurrences of 
$t_{n+1}$ is the same on both sides. It follows (recall \cite[\S 1.5 Proposition 5]{Bourbaki_1981}) that  the number of times $t_{n+1}$ occurs in a 
 reduced expression of an element of $ W(\tilde{C}_{n+1}) $ does not depend of this reduced expression.    

				\begin{definition} \label{AL} Let $u \in W (\tilde C_{n+1})$. 
				We define the {\rm affine length}  of $u$  to be the number of times  $t_{n+1}$ occurs   in a (any) 
 reduced expression of $u$. We denote  it by $L(u)$. \\
			\end{definition}		
    
\begin{lemma}\label{lemmafull}
Let $w$ be  a fully commutative element in $W(\tilde C_{n+1})$ with $L(w) =m \ge 2$. 
Fix  a reduced expression of $w$ as follows: 
$$
w =  u_1 t_{n+1} u_2 t_{n+1} \dots u_m t_{n+1} u_{m+1}  $$
with $u_i$, for $1\le i \le m+1$, a reduced expression of a fully commutative element in 
$W^c(B_{n+1})$.  
Then $u_2, \dots, u_m$ are extremal  elements and there is a reduced expression of $w$ of the  form:  
 \begin{equation}\label{forme1}
w = [  i_1,n ]  t_{n+1}  [  i_2,n ] t_{n+1} \dots  [ i_m,n ]  t_{n+1} v_{m+1}  \\
 \end{equation} 
where $ v_{m+1} \in W^c(B_{n+1})$,  $-n  \le i_m \le \dots i_2 \le  i_1 \le n +1$ and $i_2 \le n$.
 \end{lemma}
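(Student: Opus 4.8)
\emph{Overview of the plan.} I would prove the statement in four stages: (i) establish extremality of the inner blocks $u_2,\dots,u_m$; (ii) decompose each extremal block into a ``top'' factor $[\,i,n\,]$ times a factor lying in $W(B_n)$; (iii) slide the $W(B_n)$-factors rightwards to reach the shape (\ref{forme1}); and (iv) deduce the monotonicity $-n\le i_m\le\cdots\le i_2\le i_1\le n+1$ from full commutativity. Throughout I use that a reduced consecutive subword of a reduced expression of a fully commutative element is again fully commutative.

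\emph{Extremality.} For $2\le k\le m$ the block $u_k$ is flanked by $t_{n+1}$ on both sides in the fixed reduced expression. If $u_k$ were not extremal then $\sigma_n$ would not occur in $u_k$, so $u_k$ would lie in $\langle\sigma_0,\dots,\sigma_{n-1}\rangle=W(B_n)$, every generator of which commutes with $t_{n+1}$ (relation $\sigma_i t_{n+1}=t_{n+1}\sigma_i$ for $i<n$). Commuting the right-hand $t_{n+1}$ leftwards past $u_k$ would create $t_{n+1}t_{n+1}=1$, so the word would not be reduced, a contradiction. Hence $u_2,\dots,u_m$ are extremal. I record this as a reusable principle: any block flanked on both sides by $t_{n+1}$ in a reduced expression is extremal.

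\emph{Decomposition and sliding.} By the description of extremal elements, each extremal block $v$ has Stembridge normal form $v=[\,l_1,n\,][\,l_2,g_2\,]\cdots[\,l_s,g_s\,]$ with $g_2<n$, hence $v=[\,l_1,n\,]\,R$ where $R:=[\,l_2,g_2\,]\cdots[\,l_s,g_s\,]\in W(B_n)$ commutes with $t_{n+1}$ (take $R=1$ when $v=[-n,n]$, and set the top factor to $[\,n+1,n\,]=1$ for a non-extremal block). I then process the blocks from left to right. Having reached $w=[\,i_1,n\,]t_{n+1}\cdots[\,i_{k-1},n\,]t_{n+1}\,\widetilde u_k\,t_{n+1}u_{k+1}\cdots$, I write $\widetilde u_k=[\,i_k,n\,]R_k$ and use $R_k t_{n+1}=t_{n+1}R_k$ to slide $R_k$ across the $k$-th $t_{n+1}$, absorbing it into the next block as $\widetilde u_{k+1}:=R_k u_{k+1}$. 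Every move is a commutation, so the word stays reduced and represents $w$; consequently $\widetilde u_{k+1}$ is a reduced consecutive subword of a reduced expression of the fully commutative $w$, hence fully commutative and in $W^c(B_{n+1})$, and for $k+1\le m$ it is flanked by $t_{n+1}$ on both sides, so extremal by the principle above. I may therefore replace it by its normal form, whose first factor is a genuine $[\,i_{k+1},n\,]$ with $i_{k+1}\le n$. Iterating produces the shape (\ref{forme1}) with $v_{m+1}:=R_m u_{m+1}\in W^c(B_{n+1})$; in particular $i_2\le n$ (block $2$ is extremal) and $i_1\le n+1$, with $i_1=n+1$ precisely when $u_1$ is not extremal.

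\emph{Monotonicity (the crux).} Since (\ref{forme1}) is a reduced expression of the fully commutative $w$, each consecutive subword $[\,i_k,n\,]\,t_{n+1}\,[\,i_{k+1},n\,]$ is fully commutative. I argue by contradiction that $i_{k+1}>i_k$ forces a braid subword. Assume first $0\le i_k<i_{k+1}\le n-1$. Commuting $t_{n+1}$ rightwards past $\sigma_{i_{k+1}},\dots,\sigma_{n-1}$ (indices $<n$) rewrites the subword as $\sigma_{i_k}\cdots\sigma_n\,\sigma_{i_{k+1}}\cdots\sigma_{n-1}\,t_{n+1}\sigma_n$; in the $\sigma$-part the leading $\sigma_{i_{k+1}}$ of the second run commutes leftwards past $\sigma_n,\dots,\sigma_{i_{k+1}+2}$ until it meets the factor $\sigma_{i_{k+1}}\sigma_{i_{k+1}+1}$ of the first run, producing the braid $\sigma_{i_{k+1}}\sigma_{i_{k+1}+1}\sigma_{i_{k+1}}$ (of order $3$, since $i_{k+1}\ge 1$ here). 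When $i_{k+1}=n$ the first block ends in $\sigma_n$, and one finds instead the affine braid $\sigma_n t_{n+1}\sigma_n t_{n+1}$. The cases $i_k<0$, where $[\,i_k,n\,]$ is the $V$-shaped word through $t=\sigma_0$, are treated the same way by comparing the two runs around their common indices. Any such subword contradicts full commutativity, so $i_{k+1}\le i_k$, completing the chain $-n\le i_m\le\cdots\le i_2\le i_1\le n+1$.

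\emph{Expected obstacle.} I expect the monotonicity step to be the only genuine difficulty; extremality and the sliding are bookkeeping with the commutation relation $\sigma_i t_{n+1}=t_{n+1}\sigma_i$ ($i<n$). The delicate point is that extracting a forbidden pattern from $i_{k+1}>i_k$ requires a sign-sensitive manipulation of the two runs $[\,i_k,n\,]$ and $[\,i_{k+1},n\,]$ separated by $t_{n+1}$, and one must verify in each configuration that the extracted word is a true braid ($\sigma_j\sigma_{j+1}\sigma_j$, the order-$4$ word $\sigma_0\sigma_1\sigma_0\sigma_1$, or $\sigma_n t_{n+1}\sigma_n t_{n+1}$). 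The boundary values --- $i_{k+1}=n$, and indices meeting $t=\sigma_0$ where the bond has order $4$ --- are exactly where the argument needs care.
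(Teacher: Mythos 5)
Your proposal is correct and follows essentially the same route as the paper's proof: extremality of the inner blocks from reducedness and the commutation of $t_{n+1}$ with $W(B_{n})$, a left-to-right peeling of each extremal block into $[\,i,n\,]$ times a $W(B_n)$-factor that is slid rightwards across $t_{n+1}$, and monotonicity by pushing the leading generator $\sigma_{|i_{j+1}|}$ of a later block leftwards until it creates a forbidden braid ($\sigma_j\sigma_{j+1}\sigma_j$, $\sigma_1\sigma_0\sigma_1\sigma_0$, or $\sigma_n t_{n+1}\sigma_n t_{n+1}$). The only difference is cosmetic: you flag rather than fully write out the boundary cases $i_{j+1}=0$ and $i_j<0$, which the paper treats in the same way.
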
 
 \begin{proof} Since $t_{n+1}$ commutes with $W( B_{n})$, 
  the fact that the expression is reduced forces  $u_i$ to be extremal for $2\le i \le m$. 
We use form~(\ref{Stembridge}) for $u_1$  and  write it as   $u_1=[  l_1,n ] x_1$, a reduced expression with $x_1$ in $W^c( B_{n})$ 
and $-n \le l_1 \le n+1$.  Here $x_1$ commutes with $t_{n+1}$ hence, setting $i_1=l_1$,  we get a reduced expression 
$$
w =[ i_1,n ] t_{n+1} x_1 u_2 t_{n+1} \dots u_m t_{n+1} u_{m+1}  .$$ 
Again $x_1u_2 \in W^c(B_{n+1})$  has a reduced expression $ [  i_2,n ] x_2$ with $-n \le i_2 \le n $ (since 
$x_1u_2$    is extremal)
and $x_2$ in $W( B_{n})$, and this $x_2$ commutes with $t_{n+1}$  and can be pushed to the right, leading to 
$$
w =[ i_1,n ] t_{n+1}[i_2,n ] t_{n+1} x_2u_3 t_{n+1} \dots u_m t_{n+1} u_{m+1}  .$$
 Proceeding from left to right we  obtain formally form (\ref{forme1}). 

Assume $i_{j+1} \ge i_j$ for some $j$, 
$1\le j < m$. If $0 <| i_{j+1}|<n$, the term $\sigma_{| i_{j+1}|}$ on the right of the $j$-th $t_{n+1}$ (starting from the left) can be pushed to the left until we reach the braid  $\sigma_{| i_{j+1}|}\sigma_{| i_{j+1}|+1}\sigma_{| i_{j+1}|}$, a contradiction to the full commutativity. If $ i_{j+1}  =-n$,   we actually have $  i_{j+1}=i_j=-n$. 
 If $  i_{j+1}  =n$,  then $i_j \le n$ and our expression contains the braid $ \sigma_n t_{n+1}\sigma_n t_{n+1}$, again a contradiction. Finally if $  i_{j+1}  =0$, we must have $  i_{j}  =0$ as well because a    negative $i_j$ would  produce, after pushing $\sigma_{i_{j+1}}=\sigma_0$ to the left, the braid $\sigma_1 \sigma_0 \sigma_1 \sigma_0$,  a contradiction. We thus get the inequalities announced.  
  \end{proof}

\begin{lemma}\label{fullandsigma} 
Let $w$ be  a fully commutative element in $W(\tilde C_{n+1})$ with $L(w) =m \ge 2$. Write $w$ in  form  (\ref{forme1})   from  Lemma \ref{lemmafull}. We have:

\begin{enumerate}
\item If $i_s =   -n $ for some $s$ with  $ 1 \le s \le m $, then $i_j =-n $ for    $ 2 \le j \le m $. 
\item If $i_s = 0 $ for some $s$ with  $ 2 \le s \le m $, then $i_j =   0 $ for    $ s \le j \le m $. 
\item If   $ -1 \ge i_s   \ge -n+1 $  for some $s$ with  $  2\le s  \le m $, then  $s=m$ and $v_{m+1} = 1$. 
\item If $i_s >0$  for some $s$ with  $ 1 \le s \le m-1 $, then either $i_s > |i_{s+1}|$, or 
$s=1$ and $ i_{2} =-n$. 
\end{enumerate}
\end{lemma}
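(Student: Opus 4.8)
The plan is to derive each of the four assertions by contradiction from the \emph{standard criterion for full commutativity} \cite{St96}: an element is fully commutative if and only if none of its reduced words contains, as a contiguous factor, a braid word $\underbrace{s s' s \cdots}_{m(s,s')}$ of length $m(s,s')\geq 3$. In $W(\tilde C_{n+1})$ the only such factors are $\sigma_i\sigma_{i+1}\sigma_i$ for $1\leq i\leq n-1$ (type A), $\sigma_0\sigma_1\sigma_0\sigma_1$ (type C), and $\sigma_n t_{n+1}\sigma_n t_{n+1}$ together with its mirror $t_{n+1}\sigma_n t_{n+1}\sigma_n$ (type B). Writing $t_{n+1}^{(j)}$ for the $j$-th occurrence of $t_{n+1}$ in (\ref{forme1}) and starting from that reduced expression together with the monotonicity $-n\leq i_m\leq\cdots\leq i_1$ of Lemma~\ref{lemmafull}, I would, in each case, assume the negation of the conclusion and then rewrite (\ref{forme1}) using only commutations --- pushing $t_{n+1}$ past $\sigma_0,\dots,\sigma_{n-1}$ and swapping $\sigma_i\sigma_j$ with $|i-j|\geq 2$ --- so as to expose one of these three braids; since commutations produce genuine reduced words of $w$, this contradicts full commutativity.

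The computational backbone is a dichotomy at the value $-n$. For $-n<i\leq n$ one has $[i,n]=[i,n-1]\sigma_n$ with $[i,n-1]$ supported on $\sigma_0,\dots,\sigma_{n-1}$, so $t_{n+1}$ commutes through $[i,n-1]$; consequently $t_{n+1}[i,n]t_{n+1}=[i,n-1]\,t_{n+1}\sigma_n t_{n+1}$, and whenever the factor to its right begins with $\sigma_n$ (i.e.\ is $[-n,n]$, or is $[n,n]=\sigma_n$) one reads off a type-B braid (the \textbf{B-move}). By contrast $[-n,n]$ itself begins with $\sigma_n$, so $t_{n+1}$ cannot be pushed into it; this is why $-n$ plays a special role. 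The second mechanism (the \textbf{A-move}) is an overlap: if a factor ends with an ascending block $\cdots\sigma_{k}\sigma_{k+1}\cdots\sigma_n$ and the next factor has leading letter $\sigma_{k}$ with $k\leq n-1$, then after clearing the intervening $t_{n+1}$ this $\sigma_{k}$ slides left (past $\sigma_n,\dots,\sigma_{k+2}$) to create $\sigma_{k}\sigma_{k+1}\sigma_{k}$; the analogous slide of a $\sigma_0$ into the buried $\sigma_1\sigma_0\sigma_1$ of a factor $[-j,n]$ produces the type-C braid $\sigma_1\sigma_0\sigma_1\sigma_0$.

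I would then feed the bad configurations into these two moves. For (1): monotonicity already forces $i_j=-n$ for $j\geq s$, and if some $i_r>-n$ with $2\leq r<s$ and $i_{r+1}=-n$, the B-move applied to $t_{n+1}^{(r-1)}[i_r,n]t_{n+1}^{(r)}$ (legitimate since $i_r\neq-n$ and, as $r\geq 2$, the flanking $t_{n+1}^{(r-1)}$ exists) meets the leading $\sigma_n$ of $[-n,n]$ and yields $t_{n+1}\sigma_n t_{n+1}\sigma_n$. For (4): if $i_{s+1}=i_s=k>0$ the A-move on $[k,n]t_{n+1}[k,n]$ gives $\sigma_{k}\sigma_{k+1}\sigma_{k}$ (with $k\le n-1$; the degenerate $k=n$ falls to a B-move using $t_{n+1}^{(s+1)}$); if $i_{s+1}=-k'$ with $1\le k'\le n-1$ and $i_s\le k'$ the A-move again gives $\sigma_{k'}\sigma_{k'+1}\sigma_{k'}$; and if $i_{s+1}=-n$ the B-move on $t_{n+1}^{(s-1)}[i_s,n]t_{n+1}^{(s)}$ gives a type-B braid \emph{provided} $s\geq 2$ --- so the only surviving possibility is $s=1$, which is precisely the stated exception. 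For (2): taking the least $t>s$ with $i_t<0$ one has $i_{t-1}=0$ and, by (1), $-n<i_t<0$; after clearing $t_{n+1}^{(t-1)}$, the A-move within $[0,n]\,[i_t,n-1]$ produces $\sigma_{|i_t|}\sigma_{|i_t|+1}\sigma_{|i_t|}$. For (3): writing $i_s=-k$ with $1\le k\le n-1$, the case $s<m$ is handled exactly as in (4) (an A-move if $i_{s+1}>-n$, a B-move via $t_{n+1}^{(s-1)}$ if $i_{s+1}=-n$), while if $v_{m+1}\neq 1$ its leading generator is transported left through $t_{n+1}^{(m)}$ into $[-k,n]$ to complete a type-A braid, or a type-C braid when that generator is $\sigma_0$.

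The main obstacle is the uniform bookkeeping of the A-move when the neighbouring index is strictly negative but different from $-n$: one must track the leading letter $\sigma_{k'}$ of $[-k',n]$ and verify that the slide genuinely reaches an adjacent $\sigma_{k'}\sigma_{k'+1}$, which needs $k'\leq n-1$ and the correct relative position of the two factors. The degenerate boundary $k'=n$, where the head is $\sigma_n$ and the A-move is blocked, is exactly what forces one back onto the B-move and hence onto the presence of a flanking $t_{n+1}$; this is the structural reason behind the $s=1$ exception in (4) and the conclusion $s=m$, $v_{m+1}=1$ in (3). The remaining care is the separate but routine treatment of the $\sigma_0$ end (type-C braids) and of the leading letter of $v_{m+1}$.
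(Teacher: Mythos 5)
Your proposal is correct and follows essentially the same route as the paper: both arguments expose a forbidden braid factor ($\sigma_j\sigma_{j+1}\sigma_j$, $\sigma_1\sigma_0\sigma_1\sigma_0$, or $t_{n+1}\sigma_n t_{n+1}\sigma_n$) by commuting the leading letter of the next block leftward, or by pushing a flanking $t_{n+1}$ rightward through $[i,n-1]$ when $i\neq -n$. Your ``A-move''/``B-move'' packaging and the use of part (1) inside part (2) are only organizational differences from the paper's case-by-case treatment.
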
 

\begin{proof} In case (1) the inequalities in Lemma~\ref{lemmafull} give the result for $j \ge s$. If  
  $s \ge 3$   the reduced expression contains $t_{n+1}[ i_{s-1},n ] t_{n+1}\sigma_n$. If $ i_{s-1}$ was not equal to $-n$, we could push to the right the leftmost term  $t_{n+1}$, which 
 commutes with   $\sigma_j$ for $j<n$, getting a  reduced expression that contains  
$t_{n+1}\sigma_n t_{n+1}\sigma_n$, a contradiction to the full commutativity.  

In case (2), if $s<m$,   we know from  Lemma~\ref{lemmafull} that $ i_{s+1}\le 0$. Our reduced expression contains $t_{n+1}[ 0,n ] t_{n+1}\sigma_{| i_{s+1}|}$. We argue as in the  proof of  Lemma \ref{lemmafull}: a negative $ i_{s+1}$ would produce either the braid $ t_{n+1}\sigma_n t_{n+1}\sigma_n $ or the braid 
  $\sigma_{| i_{s+1}|}\sigma_{|i_{s+1}|+1}\sigma_{| i_{s+1}|}$, a contradiction. 

For  case (3) we observe similarly that   the  expression  $t_{n+1}[ i_{s},n ] t_{n+1}\sigma_j$ would produce the braid 
$\sigma_{j+1} \sigma_{j } \sigma_{j+1} $ 
if $0<j<n$, the braid  $ t_{n+1}\sigma_n t_{n+1}\sigma_n $ if $j=n$, and the braid 
$\sigma_1 \sigma_0 \sigma_1 \sigma_0$ if $j=0$. This  leaves no possibility other than the one announced.  

For case (4), we observe again that $i_s \le |i_{s+1}|< n$  would produce the braid 
$\sigma_{| i_{s+1}|}\sigma_{| i_{s+1}|+1}\sigma_{| i_{s+1}|}$, and $i_{s+1}= n$ 
would produce the braid $\sigma_n t_{n+1}\sigma_n t_{n+1}$. We are left with checking the case $ i_{s+1}= - n$. 
If $s>1$ this  produces the braid  $ t_{n+1}\sigma_n t_{n+1}\sigma_n $ because the $(s-1)$-th $ t_{n+1} $ from the left can be pushed to the right until it reaches $\sigma_n$, whence the result. 
\end{proof}

With these lemmas in hand we are ready to present the classification of fully commutative elements in 
$W (\tilde C_{n+1})$.

\begin{theorem}\label{FC} 
Let $w \in W^c(\tilde C_{n+1})$ with $L(w)   \ge 2$.  
Then $w$ can be written in a unique way as a reduced word of   one and only one of the following two forms, for non negative integers $p$ and $k$:  
\begin{description}
\item[First type]
\begin{equation}\label{formefinalefirsttype}
  w =   [  i,n ]   t_{n+1}( [  -n,n ] \;  t_{n+1} )^k  ([ f,n ] )^{-1} \end{equation}
 with $ -n \le i \le n+1 $ and $-n \le f \le n+1 $.\\ 

\item[Second type]
\begin{equation}\label{formefinalesecondtype}
\begin{aligned}
w &=   [  i_1,n ]  \; t_{n+1}  [  i_2,n ]  \; t_{n+1} \dots  [  i_p,n ]  \;  t_{n+1}    ( [  0,n ]  \; t_{n+1} )^k  
\;    w_r   \quad  \text{if } p>0 ,  \\  
w &=       ( [  0,n ]  \; t_{n+1} )^k  
\;    w_r  \quad   \text{if } p=0, 
 \end{aligned}
 \end{equation}
  with $w_r \in W^c(B_{n+1})$   and  \\

\begin{itemize}
\item if $k > 0$:     $w_r=[ 0,r_1 ][ 0,r_2 ] \dots [ 0,r_u ]$ with $-1 \le r_u < \dots < r_1  \le n$ ; 
\item   if $p > 0$: 
 $ \  
n+1 \ge i_1>... > i_{p-1} > |i_p| >0    \  $; 
\item   if $p > 0$ and   $  i_p  < 0$:   $k=0$,  $w_r=1$ and $i_p \ne -n$;
\item if $k =0$ and $i_p > 0$:  $w_r$ is of form (\ref{Stembridge})   such that $ |l_1 |< i_p$. 
\\

\end{itemize}
\end{description}

The affine length of $w$ of the first (resp. second) type is  $k+1$ (resp. $p + k$) and we have  $ 0 \le p \le n+1  $. \\ 

Now suppose that $L(w) = 1$, then it has a reduced expression of the form: 
 \begin{equation}\label{formefinalelongueur1}
  [  i,n ]  \  t_{n+1}   \    v  
  \end{equation}
 where

\begin{itemize}
\item  if $0 < i \le n+1$ then $v$ is of the form (\ref{Stembridge}) such that for $1\leq j \leq r$ either $l_j= n-j+1$ or $l_j<i$;
\item   if $i <0 $ then $v = ([ h,n ] )^{-1}$ with $-n \le h \le n+1$;
\item  if $  i=0 $ then either $v$ is equal to $ ([ h,n ] )^{-1}$ for $-n \le h \le n+1$, or to 

$([ z,n ])^{-1}[ 0,r_1 ][ 0,r_2 ] \dots [ 0,r_m ]$ for $-1 \le r_m < \dots r_2 < r_1 < z \le n+1$.\\
\end{itemize}

Conversely,   every $w$ of the above form is in $W^c(\tilde C_{n+1})$.

\end{theorem}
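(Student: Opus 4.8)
The plan is to prove Theorem~\ref{FC} in three movements: first derive the two forms for $L(w)\ge 2$ from the structural lemmas, then handle the $L(w)=1$ case by a direct but careful analysis, and finally establish the converse. Throughout I would lean heavily on Lemmas~\ref{lemmafull} and~\ref{fullandsigma}, which already reduce any fully commutative $w$ with $L(w)=m\ge 2$ to the skeleton~(\ref{forme1}), namely $w=[i_1,n]\,t_{n+1}[i_2,n]\,t_{n+1}\cdots[i_m,n]\,t_{n+1}\,v_{m+1}$ with the monotonicity constraints $-n\le i_m\le\cdots\le i_1\le n+1$ and $i_2\le n$. The whole game for $L(w)\ge2$ is to show that these constraints, together with the four dichotomies of Lemma~\ref{fullandsigma}, force exactly one of the two declared shapes.

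First I would dispose of the existence of the normal form. The key observation, reading the inequalities of Lemma~\ref{lemmafull} backwards, is that the sequence $(i_j)$ is weakly increasing from right to left, so the values $-n$ and $0$ behave as ``absorbing'' on the appropriate side: by parts (1) and (2) of Lemma~\ref{fullandsigma}, once an interior index hits $-n$ all indices with $j\ge 2$ equal $-n$, and once an interior index hits $0$ all indices to its right equal $0$. This is precisely what separates the two types. If some interior $i_j=-n$, then the tail is a block of $[-n,n]\,t_{n+1}$ factors, and collapsing $([-n,n]\,t_{n+1})^k$ out of the middle (using part (3) to force $v_{m+1}=1$ and the tail to terminate as an inverse interval $([f,n])^{-1}$) yields the first type~(\ref{formefinalefirsttype}). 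Otherwise no interior index is $-n$, the strictly decreasing positive part $i_1>\cdots>i_{p-1}>|i_p|$ comes from part (4), and the trailing zeros assemble into $([0,n]\,t_{n+1})^k$ followed by a residual $w_r\in W^c(B_{n+1})$ of Stembridge form, giving the second type~(\ref{formefinalesecondtype}). The affine-length bookkeeping ($k+1$ versus $p+k$) and the bound $p\le n+1$ then follow by counting the $t_{n+1}$'s and noting that the strict chain $n+1\ge i_1>\cdots>|i_p|>0$ has at most $n+1$ links.

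The second movement is the $L(w)=1$ case, which I would treat separately because Lemmas~\ref{lemmafull}--\ref{fullandsigma} assume $m\ge2$. Here I would write $w=u\,t_{n+1}\,u'$ with $u,u'\in W^c(B_{n+1})$, push the part of $u$ commuting with $t_{n+1}$ to the right as in the proof of Lemma~\ref{lemmafull} to get $w=[i,n]\,t_{n+1}\,v$, and then analyze the possible obstructions to full commutativity caused by $v$ sitting to the right of the single $t_{n+1}$. The trichotomy on the sign of $i$ (positive, negative, zero) is exactly what controls which braids $v$ is allowed to create: a positive $i$ permits $v$ in Stembridge form subject to the compatibility $l_j=n-j+1$ or $l_j<i$ (preventing a $\sigma_i$ from migrating leftward into a long braid), while $i\le 0$ forces $v$ to be an inverse interval, possibly extended by a block of $[0,r_j]$ factors when $i=0$. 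Each admissible/forbidden configuration is checked by the same ``push-and-find-a-braid'' argument used repeatedly in the lemmas.

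The converse — that every listed form actually is fully commutative — is the direction I expect to be the main obstacle, since it requires ruling out \emph{all} ways of reducing each candidate expression to a long braid $\sigma_i\sigma_{i+1}\sigma_i$, $\sigma_0\sigma_1\sigma_0\sigma_1$, or $\sigma_n t_{n+1}\sigma_n t_{n+1}$. My approach would be to argue that each normal form is already reduced (counting $t_{n+1}$'s via affine length and using Theorem~\ref{1_2} inside each $B_{n+1}$-block) and then show no heap-theoretic defect can occur: the strict inequalities between consecutive indices guarantee that a generator $\sigma_k$ appearing on one side of a $t_{n+1}$ can never commute past to meet a $\sigma_{k\pm1}\sigma_k$ pattern, and the interface conditions (such as $|l_1|<i_p$, or $l_j<i$) are engineered precisely so that the $B_{n+1}$-residue cannot interact badly with the interval to its left. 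I would make this rigorous by induction on the affine length, peeling off the leftmost $[i_1,n]\,t_{n+1}$ factor and invoking full commutativity of the shorter remaining word together with the commutation relations that isolate the new factor; the delicate point is verifying at the single junction between the interval block and $w_r$ that the only available braid moves are commutations, which is where the sign and size constraints earn their keep.
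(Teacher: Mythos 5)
Your plan follows essentially the same route as the paper: both arguments take the skeleton of Lemma~\ref{lemmafull}, use the four cases of Lemma~\ref{fullandsigma} to sort the index sequence $(i_j)$ into the two absorbing regimes ($-n$ giving the first type, the strictly decreasing positive chain followed by zeros giving the second), treat $L(w)=1$ separately by the same push-and-find-a-braid analysis on $v$, and settle the converse by induction on the affine length. The only small slip is attributing the shape of the rightmost factor $([f,n])^{-1}$ in the first type to part (3) of Lemma~\ref{fullandsigma} (which concerns $-n+1\le i_s\le -1$ and forces $v_{m+1}=1$); the paper instead observes directly that any reduced expression of $v_{m+1}$ must begin with $\sigma_n$, but this does not affect the soundness of the plan.
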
 
\begin{proof}
We  start   with an  element 
$w \in W^c(\tilde C_{n+1})$ with $L(w) = m \ge 2 $, written  as in (\ref{forme1}), and we discuss according to the value of    $i_2$. 
\begin{enumerate}
\item  If $i_2=-n$, we get from Lemma \ref{fullandsigma} an element of the first type: the same arguments show that any reduced expression of  the rightmost term $v_{m+1}$ must start with  $\sigma_n$  on the left, which forces the shape of $v_{m+1}$.   
\item If   $-(n-1)\le i_2 < 0$ Lemma \ref{fullandsigma} gives directly a second type element with $p=2$, $k=0$ and $w_r=1$. 
\item  If  $i_2=0$  we get  $i_s=0$ for $2 \le s \le m$  from Lemma \ref{fullandsigma}   and  $i_1 \ge 0$ from Lemma 
\ref{lemmafull}.  We thus have an element of the second type, with $p=0$  if $i_1 = 0$, or $p=1$ if $i_1 > 0$. 
Any reduced expression of the rightmost term  $v_{m+1}$ must start with  $\sigma_0$  on the left, which forces the shape of $v_{m+1}=w_r$.   
\item If   $1\le i_2\le n$, then we must have $|i_3| < i_2$ by Lemma \ref{fullandsigma}. We iterate this process 
until we find $j$ with $i_2 > \dots > i_j \ge 1$ and either $j=m$, hence we have an element of the second type with 
$p=m$ and $k=0$, or $i_{j+1} \le 0$, which gives an element of the second type, with $p=j$ and $k>0$ if $i_{j+1} = 0$, or  
$p=j+1$, $k=0$ and $w_r=1$ if $i_{j+1} < 0$ (Lemma \ref{fullandsigma}). When $k$ is positive, $w_r$ is as in case 
(3). When $k=0$ the condition on $w_r$ follows as in the proof of Lemma \ref{fullandsigma} (4). 
\end{enumerate}

For an element $w \in W^c(\tilde C_{n+1})$ of affine length   $L(w) = 1 $, 
written     $w=[  i,n ]  \  t_{n+1}   \    v  $ with $v \in W^c(B_{n+1})$,  the arguments are similar, using   form (\ref{Stembridge}) for $v$. If $i<0$ (resp. $i=0$) 
     any reduced expression of    $v $ must start with  $\sigma_n$ (resp. $\sigma_n$ or $\sigma_0$) on the left. If 
$i=n+1$, there is no further condition on $v$, while if $0<i\le n$  any reduced expression of    $v $ must start with  $\sigma_n$ or $\sigma_t$ with $t <i$.  

The fact that any element of one of these forms is fully commutative is proven by an easy induction. 
\end{proof}

We remark that elements of the first type and elements of affine length $1$ of the form   $[  i,n ]  \  t_{n+1}   ([ h,n ] )^{-1}$ 
  have a unique reduced expression.  Moreover,   an element of affine length at least $2$  has a unique reduced expression if and only if it is of the first type.  
Inserting the elements of affine length   $1$ in the first type and second type sets  would not have given us a partition of the set of those  elements as we will see in the next example. This is the reason  why we handle them separately.

  \begin{example} {\rm We   list the elements in $W^{c}(\tilde{C}_{2} )$ of positive affine length.}  
  \begin{itemize}
\item   First  type elements:  
$$   c \  (\sigma_1 t \sigma_1 t_2)^h \  d \qquad \text{with } \     \left\{\begin{matrix}   h \ge 1,  \cr 
    c  \in  \left\{  1, t_2 ,  \sigma_1 t_2, t \sigma_1 t_2\right\}  ,  \cr
   d  \in  \left\{  1,  \sigma_1, \sigma_1 t,  \sigma_1 t \sigma_1\right\}  , \cr 
\text{if } c=1 , \text{then }h\ge 2.  \end{matrix}\right.
 $$
 
\item   Second type elements:   
$$   a \  ( t \sigma_1t_2)^k \  b    \   \qquad \text{with } \     \left\{\begin{matrix} k \ge 1, \cr 
   a \in \left\{ 1,  t_2,   \sigma_1t_2\right\}, \cr  
    b \in  \left\{  1,  \sigma_1, t,  \sigma_1t, t \sigma_1 \right\}, \cr 
\text{if } a=1 , \text{then }k\ge 2.  \end{matrix}\right.
 $$

\item  Elements of affine length $1$:   
$$  \qquad   e t_2 f   \quad \text{with } \     \left\{\begin{matrix} 
\text{either }  e =  \sigma_1 t  \sigma_1 \text{ and }   f \in  \left\{ 1, \sigma_1, t\sigma_1 ,  \sigma_1 t  \sigma_1\right\}, \qquad \qquad 
 \cr 
\text{ or } e \in \left\{ 1, \sigma_1, t \sigma_1 \right\} \text{ and }  f \in  \left\{   1, t, t \sigma_1, t\sigma_1t,\sigma_1,  \sigma_1 t,  \sigma_1 t  \sigma_1 \right\}. 
\end{matrix}\right.$$
  \end{itemize}

{\rm Notice that if $h$ and $k$ were allowed to be null,  then  $\sigma_1 t_2 \sigma_1$ could be obtained in two different ways:  $ a= \sigma_1t_2, b= \sigma_1$, $k=0$, and 
$ c = \sigma_1t_2 , d= \sigma_1 $, $h=0$.}
    \end{example}

    \begin{remark}\label{CC} {\rm 
In \cite{CFC} the authors define and study {\it cyclically fully commutative}    elements:   elements for which a cyclic permutation of the terms of any reduced expression  transforms it into  a reduced expression for a fully commutative element. 
The normal form given in Theorem \ref{FC} may be used for such a study. 
Indeed let   $w$ be a first type element given in its normal form. Since it has a unique reduced expression, it is easy to see that $w$ is cyclically fully commutative   if and only if either $0 \leq i \leq n+1 $ and $ f= -(i-1)$,  or  $ -n \leq i < 0 $ and $ f = - (i+1)$. In this case,  after an $n-(i+1)$ (first case) or  $n- (i-1)$  (second case) cyclic shift,  $w$ is transformed  into  $( [  -n,n ] \;  t_{n+1} )^{k+1} $.  Let now  $w$ be an element of the second type given in its normal form with $k>0$. Suppose that $w$ is cyclically fully commutative, then $m=p$, moreover:
$$
  w= [  i_1,n ]  \; t_{n+1}  [  i_2,n ]  \;  t_{n+1} \dots  [  i_p,n ]   \; t_{n+1}   ( [  0,n ]  \; t_{n+1} )^k [ 0,i_1 -1 ][ 0,i_2 -1 ] \dots [ 0,i_p-1 ].
$$
   In this case $w$ is transformed into  $( [  0,n ]  \; t_{n+1} )^{k+p}$ by a suitable cyclic shift. }
\end{remark}

\section{The tower of $\tilde C$-type  fully commutative elements} 

The Coxeter group $W(B_n)$ with  Coxeter generators $t,\sigma_1, \cdots, \sigma_{n-1}$,  is  a parabolic subgroup of $W(B_{n+1})$. This is no longer the case for $W(\tilde C_{n })$ and $W(\tilde C_{n+1})$ -- proper parabolic subgroups of $W(\tilde C_{n+1})$ are finite. 
This is an important difficulty when dealing with the affine case. 
As for $W(\tilde C_{n })$,  the injection $P_n: W(\tilde C_{n }) \longrightarrow W(\tilde C_{n+1 })$ of Corollary~\ref{Coxeterinj} 
  is a group monomorphism that preserves the full commutativity of  first type elements and   elements of affine length 1    in $W^c(\tilde C_{n }) $, but  does not preserve it for $t_n[0,n-1]t_n$, for example, in the set of second type fully commutative elements. We will take advantage of the normal form for fully commutative elements established in Theorem \ref{FC}  to produce embeddings from  $W^c(\tilde C_{n}) $ into $W^c(\tilde C_{n+1}) $. \\

 For $ n>0 $, we denote by   $W^c_1(\tilde C_{n}) $   the set of first type fully commutative elements in addition to fully commutative elements  of affine length 1, and  by  $W^c_2(\tilde C_{n}) $   the set of second type fully commutative elements. We thus  have   the following partition: 
$$
W^c(\tilde C_{n}) = W^c_1(\tilde C_{n}) \bigsqcup W^c_2(\tilde C_{n})  \bigsqcup W^c(B_{n}).
$$

\begin{definition}\label{defIJ}  
For any $w\in W^c(\tilde C_{n })$ we define elements $I(w)$   and  $J(w)$ of $W (\tilde C_{n+1} ) $ by the following expressions: 
\begin{itemize}
\item  if  $w \in W^c_2(\tilde C_{n } )$, then $I(w)$  (resp.  $J(w)$) is  obtained by substituting 
$\sigma_n t_{n+1}$ (resp. $t_{n+1} \sigma_n $)  to $t_n$ in the normal form
(\ref{formefinalesecondtype}) for $w$;
\item if  $w \in W^c_1(\tilde C_{n } )$, then $I(w)=J(w)$ is obtained by substituting $\sigma_n t_{n+1}\sigma_n$ to $t_n$ in the normal form (\ref{formefinalefirsttype}) or (\ref{formefinalelongueur1}) for $w$;  
\item if  $w \in W^c(B_{n})$,  then $I(w)=J(w)=w$.
\end{itemize}
\end{definition}

\begin{theorem} \label{IJ} 
For any $w\in W^c(\tilde C_{n })$, the 
   expressions     for $I(w)$ and $J(w)$ in Definition  \ref{defIJ} are reduced and they are reduced expressions for
 fully commutative elements  in $W(\tilde C_{n+1})$. The maps thus defined: 

$$I, J : W^c(\tilde C_{n} ) \longrightarrow W^c(\tilde C_{n+1} ) $$

\medskip\noindent
are injective, preserve the affine length  and satisfy 
  $$ \begin{aligned}
l(I(w)) &=  l(J(w)) = l(w) + L(w) \quad  \text{ for } w \in W^c_2(\tilde C_{n } ),  \\ 
   l(I(w)) &=  l(J(w)) = l(w) +2 L(w)  \quad  \text{ for }  w \in W^c_1(\tilde C_{n } ).
\end{aligned}
$$ 
The injections $I$ and $J$ map first type (resp. second type) elements to  first type (resp. second type) elements 
and their images   intersect exactly on $I(W^c_1(\tilde C_{n})  \bigsqcup W^c(B_{n}))$.  \\ 
\end{theorem}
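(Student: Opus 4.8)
The plan is to verify each assertion of Theorem~\ref{IJ} in turn, leaning heavily on the normal form of Theorem~\ref{FC} and on the fact that the substitutions defining $I$ and $J$ operate on canonical expressions. First I would establish that the given expressions are reduced and fully commutative. Since $t_n$ occurs exactly $L(w)$ times in the normal form, substituting $\sigma_n t_{n+1}$ or $t_{n+1}\sigma_n$ (for second type, length-$1$ elements) introduces $L(w)$ new letters, while substituting $\sigma_n t_{n+1}\sigma_n$ (for first type) introduces $2L(w)$; this immediately yields the length formulas $l(I(w))=l(w)+L(w)$ and $l(I(w))=l(w)+2L(w)$, \emph{provided} the resulting words are reduced. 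To see they are reduced and fully commutative, I would simply check that each substituted word already matches one of the normal forms of Theorem~\ref{FC} in $W(\tilde C_{n+1})$. For instance, applying $I$ to a second type element $[0,n-1]t_n\cdots$ produces a word in which each $t_n$ becomes $\sigma_n t_{n+1}$, and one verifies that the indices $i_j$ (now ranging up to $n$ instead of $n-1$) satisfy exactly the inequalities required in~(\ref{formefinalesecondtype}); the affine length, being the number of $t_{n+1}$'s, equals the former number of $t_n$'s, so $L$ is preserved. The key mechanical point is that the substitution does not create any forbidden braid subword, which is precisely what the conditions in Theorem~\ref{FC} encode.

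Next I would prove injectivity. Because $I$ and $J$ send normal forms to normal forms (as just checked), and because Theorem~\ref{FC} guarantees \emph{uniqueness} of the normal form, the maps are injective as soon as one can recover $w$ from $I(w)$ (resp.\ $J(w)$). I would argue that the inverse substitution is well-defined: in the image word, reading in $W(\tilde C_{n+1})$, every occurrence of $t_{n+1}$ is flanked by $\sigma_n$ on the appropriate side(s) in a way dictated by which summand $W^c_1$, $W^c_2$, or $W^c(B_n)$ the preimage lies in, and the normal-form data $(i_1,\dots,i_p,k,w_r)$ of the image determines uniquely the corresponding data of $w$. I would treat the three pieces of the partition $W^c(\tilde C_n)=W^c_1\sqcup W^c_2\sqcup W^c(B_n)$ separately, noting that $I$ preserves this trichotomy: first type maps to first type, second type to second type, and $W^c(B_n)\subset W^c(B_{n+1})$ is an honest inclusion.

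Finally I would compute the image and the intersection of the two images. The statement that $I$ and $J$ agree on $W^c_1(\tilde C_n)\sqcup W^c(B_n)$ is true by the very definition of the maps, since there $I(w)=J(w)$. The substance is to show that on $W^c_2(\tilde C_n)$ the maps differ and that $\operatorname{Im}(I)\cap\operatorname{Im}(J)$ contains nothing beyond $I(W^c_1\sqcup W^c(B_n))$. For a second type element, $I$ replaces each $t_n$ by $\sigma_n t_{n+1}$ (an $\sigma_n$ immediately to the left of each $t_{n+1}$) whereas $J$ replaces it by $t_{n+1}\sigma_n$ (an $\sigma_n$ to the right); I would examine the normal form of $I(w)$ versus $J(w')$ and show that the positions of $\sigma_n$ relative to the $t_{n+1}$'s force $w$ and $w'$ to be of a type where the two substitutions coincide, i.e.\ first type or $B$-type. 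Concretely, for a second type image under $I$ the leftmost block is $[i_1,n]t_{n+1}$ with a trailing $\sigma_n$ before $t_{n+1}$, while a $J$-image carries the $\sigma_n$ after its $t_{n+1}$; matching these against the unique normal form of Theorem~\ref{FC} pins down when both descriptions can hold simultaneously.

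The main obstacle I anticipate is the intersection computation. Verifying reducedness, the length formulas, affine-length preservation, and injectivity are all essentially bookkeeping against the inequalities of Theorem~\ref{FC}. By contrast, characterizing $\operatorname{Im}(I)\cap\operatorname{Im}(J)$ requires comparing two \emph{different} normal forms of a single element $x\in W(\tilde C_{n+1})$: one must show that if $x=I(w)=J(w')$ then the $\sigma_n$-placement constraints are compatible only when $w=w'\in W^c_1\sqcup W^c(B_n)$. This is delicate because it hinges on how the uniqueness clause of Theorem~\ref{FC} interacts with the two distinct ways a $\sigma_n t_{n+1}$ versus $t_{n+1}\sigma_n$ pattern can be absorbed into a single canonical form, and care is needed at the boundary indices $i_p=\pm n$ and $i=0$ where the second type and first type descriptions nearly overlap.
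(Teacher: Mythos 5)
Your proposal follows essentially the same route as the paper: substitute into the normal form of Theorem~\ref{FC}, check that the result is again a normal form (for $J$ on second type elements this is the computation that commutes each $t_{n+1}$ leftward past $[\,i_j,n-1\,]$ to get $t_{n+1}[\,i_1,n\,]t_{n+1}[\,i_2,n\,]\cdots$), and deduce reducedness, the length formulas, affine-length preservation and injectivity from the uniqueness of the normal form. The intersection step you flag as delicate is settled in the paper by a one-line observation that makes your ``$\sigma_n$-placement'' idea precise: for second type $w$ the normal form of $J(w)$ begins with $t_{n+1}$, whereas $t_{n+1}I(w)$ is itself a normal form of the second type, so $t_{n+1}$ is never a left descent of $I(w)$; hence $I(W^c_2(\tilde C_{n}))$ and $J(W^c_2(\tilde C_{n}))$ are disjoint and the claimed intersection follows.
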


\begin{proof}  We have for $-(n-1)\le i, f \le n$:   
$$I  ([  i,n-1 ]   t_{n}( [  -(n-1),n-1 ] \;  t_{n} )^k  ([ f,n-1 ] )^{-1})= [  i,n ]   t_{n+1}( [  -n,n ] \;  t_{n+1} )^k  ([ f,n ] )^{-1}$$
hence if 
  $w$ in $W^c (\tilde C_{n  } ) $ is a first type element written in form  (\ref{formefinalefirsttype}), then the expression 
$I(w)=J(w)$ is the normal form  (\ref{formefinalefirsttype}) of 
a first type element in $W^c (\tilde C_{n+1  } ) $. 

Similarly if $w= [  i,n-1 ]   t_{n}v$ is fully commutative of  affine length 1 written in form (\ref{formefinalelongueur1}), we have: 
$$ I(w)= [  i,n-1 ]  \sigma_n  t_{n+1} \sigma_n v =  [  i,n  ]     t_{n+1} (\sigma_n v)$$ 
which is the normal form (\ref{formefinalelongueur1}) of 
an element of affine length $1$  in $W^c (\tilde C_{n+1  } ) $. \\

Now let $w$ be a second  type element written in form  (\ref{formefinalesecondtype}), we see directly that 
$$
\begin{aligned} 
I(w)&=I( [  i_1,n-1 ]  \; t_{n}  [  i_2,n-1 ]  \; t_{n} \dots  [  i_p,n-1 ]  \;  t_{n}    ( [  0,n-1 ]  \; t_{n} )^k  w_r) \\
&=  [  i_1,n ]  \; t_{n+1}  [  i_2,n ]  \; t_{n+1} \dots  [  i_p,n ]  \;  t_{n+1}    ( [  0,n ]  \; t_{n+1} )^k  w_r , 
\end{aligned} 
$$
 the  normal form  (\ref{formefinalesecondtype}) of 
a second type element in $W^c (\tilde C_{n+1  } ) $. We now compute $J(w)$, recalling that 
$t_{n+1} $ commutes with $\sigma_i$ for $0 \le i < n$: 
$$
\begin{aligned} J(w)&=J( [  i_1,n-1 ]  \; t_{n}  [  i_2,n-1 ]  \; t_{n} \dots  [  i_p,n-1 ]  \;  t_{n}    ( [  0,n-1 ]  \; t_{n} )^k  w_r) \\
&=[  i_1,n-1 ]  \; t_{n+1} \sigma_n [  i_2,n-1 ]  \; t_{n+1} \sigma_n \dots  [  i_p,n-1 ]  \;  t_{n+1}  \sigma_n  ( [  0,n-1 ]  \; t_{n+1} \sigma_n)^k  w_r \\  
&=
  \; t_{n+1} [  i_1,n-1 ] \sigma_n \; t_{n+1} [  i_2,n-1 ]  \sigma_n \dots  \;  t_{n+1}  [  i_p,n-1 ]  \sigma_n  (  \; t_{n+1} [  0,n-1 ] \sigma_n)^k  w_r \\
&= \; t_{n+1} [  i_1,n ]  \; t_{n+1} [  i_2,n ]   \dots  \;  t_{n+1}  [  i_p,n ]   (  \; t_{n+1} [  0,n ] )^k  w_r 
\end{aligned} 
$$
If $k=0$, we check that $w'_r=  [  i_p,n ] w_r $ satisfies the condition on the rightmost term in form  (\ref{formefinalesecondtype}). If $k>0$: 
$$
\begin{aligned} J(w)&=
  \; t_{n+1} [  i_1,n ]  \; t_{n+1} [  i_2,n ]   \dots  \;  t_{n+1}  [  i_p,n ]   t_{n+1} (  \; [  0,n ]  t_{n+1})^{k-1} [  0,n ] w_r  , 
\end{aligned} 
$$
where again  $w'_r=[  0,n ] w_r$ satisfies the condition Theorem \ref{FC}. Hence  $J(w)$ is the normal form 
of a second type element in $W^c (\tilde C_{n+1  } ) $. 
 We notice that the leftmost term in the expression of $J(w)$ is  $t_{n+1}$ whereas no reduced expression of $I(w)$ can have $t_{n+1}$ as its leftmost term (since the expression $ t_{n+1}I(w)$ is also a normal form of the second type), therefore the images  $I(W^c_2(\tilde C_{n } ))$ and $J(W^c_2(\tilde C_{n } ))$ are disjoint.  \\

$I$ and $J$ clearly preserve the affine length. 
The fact that   the substitution process on $W^c_2(\tilde C_{n}) $ (resp. on $W^c_1(\tilde C_{n}) $) adds to the original length the number of occurrences (resp. the double of the number of occurrences) of $t_n$, i.e. the affine length or its double, is clear.  The injectivity of both maps results from the uniqueness of the normal form. 
\end{proof} 

\medskip
We remark that the injections $I$ and $J$ on $W^c_1(\tilde C_{n})  \bigsqcup W^c(B_{n})$ are   but the restriction   of $R_n$. Actually $I$ and $J$ may be defined on all   $ W(\tilde C_{n }) $, but as we don't need this, we won't examine it further.

\section{The tower of  $\tilde C$-type Temperley-Lieb algebras}\label{TL}  

Let $K$ be an integral domain  of characteristic $0$ and let $q$ be an invertible element in $K$. Let  $\Gamma$ be a Coxeter graph with associated Coxeter system $(W,S)$ and    Hecke algebra  $H\Gamma(q)$.  Following Graham \cite[Definition 6.1]{Graham}, 
we define the $\Gamma$-type Temperley-Lieb algebra $TL\Gamma(q)$ to be the quotient of the Hecke algebra 
 $H\Gamma(q)$ by the two-sided ideal generated by the elements $ L_{s,t} = \sum_{w\in <s,t>} g_w $, 
where $s$ and $ t  $ are non commuting elements in $S $ such that $st$ has finite order.  For $w$ in $W$ we denote by $T_w$ the image of $g_w \in H\Gamma(q)$ under the canonical surjection from $H\Gamma(q)$ onto $TL\Gamma(q) $.  The set $\left\{ T_w \ | \ w \in W^c(\Gamma) \right\}$ forms a  $K$-basis for $TL\Gamma(q)$ \cite[Theorem 6.2]{Graham}. \\

	 For $x,y$ in a given ring with  identity,  we define: 
$$\begin{aligned}
V(x,y) &= xyx+xy+yx+x+y+1, \\ 
  Z(x,y)   &=  xyxy+xyx+yxy+xy+yx+x+y+1
.
\end{aligned}
$$
		
			For $ n\geq 1$, the $\tilde{C}$-type Temperley-Lieb algebra with $n+2$ generators $TL\tilde{C}_{n+1}(q)$ is given by the   set of generators $\left\{ T_t, T_{\sigma_{1}}, \dots T_{\sigma_{n}}, T_{t_{n+1}}\right\}$, with the defining relations (with our convention $t= \sigma_0$):\\
	 	\begin{equation}\label{definingrelations} 	
	   \quad \left\{ \quad  \begin{aligned}
		 &T_{\sigma_{i}} T_{\sigma_{j}} =T_{\sigma_{j}} T_{\sigma_{i}}  \text{ for } 0 \leq i,j\leq n \text{ and } \left| i-j\right| \geq 2, \\
		  &T_{\sigma_{i}} T_{t_{n+1}} =T_{t_{n+1}} T_{\sigma_{i}}   \text{ for  }  0\leq i \leq n-1, \\
	          &T_{\sigma_{i}}T_{\sigma_{i+1}}T_{\sigma_{i}} = T_{\sigma_{i+1}}T_{\sigma_{i}}T_{\sigma_{i+1}}  \text{ for }  1\leq i\leq n-1,\\
         	    &T_{t}T_{\sigma_{1}}T_{t}T_{\sigma_{1}} =T_{\sigma_{1}}  T_{t} T_{\sigma_{1}} T_{t}   ,\\
			    &T_{t_{n+1}}T_{\sigma_{n}}T_{t_{n+1}}T_{\sigma_{n}} =T_{\sigma_{n}}  T_{t_{n+1}} T_{\sigma_{n}}  T_{t_{n+1}}  ,\\
			    &T^{2}_{t_{n+1}} = (q-1)T_{t_{n+1}} +q \text{ and } T^{2}_{\sigma_{i}} = (q-1)T_{\sigma_{i}} +q  \text{ for }  0\leq i\leq n,\\
						  &V(T_{\sigma_{i}},T_{\sigma_{i+1}}) = 0  \text{  for }1\leq i\leq n-1, \\
						  & Z(T_{\sigma_{1}},T_{t}) = Z(T_{\sigma_{n}},T_{t_{n+1}})= 0  .
			  \end{aligned} \right.     \qquad 
		\end{equation}
 
 We set $TL\tilde{C}_{1}(q)= K$. In the following we denote by 
$h_w$, $w \in W^c(\tilde{C}_n)$, the basis elements of  $TL\tilde{C}_{n}(q)$ to distinguish them from those of $TL\tilde{C}_{n+1}(q)$.

		                     
                    \begin{lemma}\label{morphismFn} 
The morphism of algebras $R_n: H\tilde{C}_n (q)   \longrightarrow   H\tilde{C}_{n+1} (q) $ defined 
in (\ref{defFn})   induces the following morphism of algebras, which we also denote by  $R_n$:	
			\begin{eqnarray}
				R_{n}: TL\tilde{C}_{n}(q) &\longrightarrow& TL\tilde{C}_{n+1}(q) \nonumber\\
				h_{\sigma_{i}} &\longmapsto & T_{\sigma_{i}} \  \   \text{ for } 0 \leq i \leq n-1 \nonumber\\
				h_{t_{n}} &\longmapsto & T_{\sigma_{n}} T_{t_{n+1}} T^{-1}_{\sigma_{n}}. \nonumber
			\end{eqnarray}
The restriction of $R_n$ to $  TLB_{n }(q)$ is an injective morphism into $TLB_{n+1}(q)$  and satisfies $R_n(h_w)= g_{I(w)}= g_{J(w)}$  for  $w \in W^c( B_{n})$. 
                    \end{lemma}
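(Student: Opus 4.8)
The plan is to exploit the fact that $R_n$ is already a morphism of Hecke algebras, so that inducing a morphism on the Temperley--Lieb quotients amounts to checking that $R_n$ carries the defining ideal of $TL\tilde{C}_n(q)$ into that of $TL\tilde{C}_{n+1}(q)$. Write $I_n$ (resp.\ $I_{n+1}$) for the two-sided ideal of $H\tilde{C}_n(q)$ (resp.\ $H\tilde{C}_{n+1}(q)$) generated by the elements $L_{s,t}=\sum_{w\in\langle s,t\rangle}g_w$, so that $TL\tilde{C}_n(q)=H\tilde{C}_n(q)/I_n$ and likewise for $n+1$. Since $R_n$ is an algebra morphism and $I_n=\sum_{s,t}H\tilde{C}_n(q)\,L_{s,t}\,H\tilde{C}_n(q)$, one has $R_n(I_n)\subseteq I_{n+1}$ as soon as $R_n(L_{s,t})\in I_{n+1}$ for each non-commuting generating pair $(s,t)$ of $W(\tilde{C}_n)$. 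The pairs $(\sigma_i,\sigma_{i+1})$ for $1\le i\le n-2$ and $(\sigma_0,\sigma_1)$ involve only generators on which $R_n$ is the identity, so $R_n(L_{s,t})$ is the identically defined generator of $I_{n+1}$; the only pair that moves is $(\sigma_{n-1},t_n)$.

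For that pair, recall that $R_n(g_{\sigma_{n-1}})=g_{\sigma_{n-1}}$ and $R_n(g_{t_n})=g_{\sigma_n}g_{t_{n+1}}g_{\sigma_n}^{-1}$, and that these two elements satisfy the order-$4$ braid relation, since it is a defining relation of $H\tilde{C}_n(q)$ and is therefore preserved by the Hecke morphism $R_n$. Consequently the eight elements of the dihedral group $\langle\sigma_{n-1},t_n\rangle$ are carried by $R_n$ to the eight monomials in $T_{\sigma_{n-1}}$ and $T_{\sigma_n}T_{t_{n+1}}T_{\sigma_n}^{-1}$ occurring in $Z$, so that the image of $L_{\sigma_{n-1},t_n}$ in $TL\tilde{C}_{n+1}(q)$ equals
\[
Z\!\left(T_{\sigma_{n-1}},\,T_{\sigma_n}T_{t_{n+1}}T_{\sigma_n}^{-1}\right).
\]
Hence the whole of the first assertion reduces to proving that this single element vanishes in $TL\tilde{C}_{n+1}(q)$.

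This identity is the main obstacle, and it lives entirely in the subalgebra generated by $T_{\sigma_{n-1}},T_{\sigma_n},T_{t_{n+1}}$, whose underlying sub-diagram is of type $B_3$. I would establish it by a direct computation: eliminate $T_{\sigma_n}^{-1}$ using (\ref{inverse}), expand $Z$, and reduce the resulting words by means of the quadratic relations $T_s^2=(q-1)T_s+q$ together with the two Temperley--Lieb relations available here, namely $V(T_{\sigma_{n-1}},T_{\sigma_n})=0$ and $Z(T_{\sigma_n},T_{t_{n+1}})=0$, and the commutation $T_{\sigma_{n-1}}T_{t_{n+1}}=T_{t_{n+1}}T_{\sigma_{n-1}}$. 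Equivalently, and more safely, one can verify the identity inside a faithful representation of the finite-dimensional algebra $TLB_3(q)$ (for instance its type-$B$ diagram calculus), where it becomes a finite matrix check. Either way this is the only genuinely computational step; all braid and quadratic constraints were discharged for free by the fact that $R_n$ is already a Hecke morphism.

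For the last assertion, note first that for $w\in W^c(B_n)$ one has $I(w)=J(w)=w$ by the third case of Definition~\ref{defIJ}, and since such $w$ is a product of $\sigma_0,\dots,\sigma_{n-1}$ on which $R_n$ acts as the identity, $R_n(h_w)=T_w=T_{I(w)}=T_{J(w)}$. Thus $R_n$ restricted to $TLB_n(q)$ is nothing but the inclusion induced by the parabolic embedding $W(B_n)\hookrightarrow W(B_{n+1})$. As $W(B_n)$ is parabolic in $W(B_{n+1})$, every $w\in W^c(B_n)$ remains fully commutative in $W(B_{n+1})$, so $W^c(B_n)\subseteq W^c(B_{n+1})$ and the family $\{T_w : w\in W^c(B_n)\}$ is part of the basis $\{T_v : v\in W^c(B_{n+1})\}$ of $TLB_{n+1}(q)$, hence linearly independent. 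Since $\{h_w : w\in W^c(B_n)\}$ is a basis of $TLB_n(q)$, the map $R_n$ sends a basis to a linearly independent family and is therefore an injection into $TLB_{n+1}(q)$, which is the claim.
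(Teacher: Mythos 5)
Your reduction is the right one and matches the logical skeleton of the paper's argument: since $R_n$ is already a morphism of Hecke algebras, the only thing to check is that the generators $L_{s,t}$ of the Temperley--Lieb ideal are sent into the corresponding ideal downstairs, and the only pair that moves is $(\sigma_{n-1},t_n)$, so everything hinges on the single identity $Z\bigl(T_{\sigma_{n-1}},\,T_{\sigma_n}T_{t_{n+1}}T_{\sigma_n}^{-1}\bigr)=0$ in $TL\tilde{C}_{n+1}(q)$. Your treatment of the last assertion (restriction to $TLB_n(q)$, injectivity via $W^c(B_n)\subseteq W^c(B_{n+1})$ and basis elements mapping to basis elements) is also correct. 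The weak point is that you leave the one identity that carries all the content as an unexecuted ``direct computation'' or ``finite matrix check''. As a proof, that is a gap: you have correctly isolated the obstacle but not removed it, and the proposed brute-force expansion (eliminating $T_{\sigma_n}^{-1}$, expanding the eight monomials of $Z$, and reducing) is genuinely messy and error-prone if actually attempted.

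What you are missing is that no computation is needed at all: the identity is a conjugation. Set $c=T_{\sigma_n}T_{\sigma_{n-1}}$. The braid relation $T_{\sigma_{n-1}}T_{\sigma_n}T_{\sigma_{n-1}}=T_{\sigma_n}T_{\sigma_{n-1}}T_{\sigma_n}$ gives $c\,T_{\sigma_n}\,c^{-1}=T_{\sigma_{n-1}}$, and the commutation of $\sigma_{n-1}$ with $t_{n+1}$ gives $c\,T_{t_{n+1}}\,c^{-1}=T_{\sigma_n}T_{t_{n+1}}T_{\sigma_n}^{-1}$. Since $Z$ is a polynomial in its two arguments, this yields
\begin{equation*}
Z\bigl(T_{\sigma_{n-1}},\,T_{\sigma_n}T_{t_{n+1}}T_{\sigma_n}^{-1}\bigr)
=(T_{\sigma_n}T_{\sigma_{n-1}})\,Z\bigl(T_{\sigma_n},T_{t_{n+1}}\bigr)\,(T_{\sigma_n}T_{\sigma_{n-1}})^{-1},
\end{equation*}
which vanishes because $Z(T_{\sigma_n},T_{t_{n+1}})=0$ is one of the defining relations of $TL\tilde{C}_{n+1}(q)$. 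This is exactly the one-line observation the paper's proof consists of. With this substituted for your deferred computation, your argument is complete and coincides with the paper's.
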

                    
                    \begin{proof}
                    
                    The lemma follows after noticing that
                    \begin{eqnarray}
                     Z(R_n(h_{\sigma_{n-1}}),R_n(h_{t_{n}})) = (T_{\sigma_{n}} T_{\sigma_{n-1}})  Z (T_{\sigma_{n}},T_{t_{n+1}})(T_{\sigma_{n}} T_{\sigma_{n-1}})^{-1}.\nonumber
                    \end{eqnarray}
                    \end{proof}
		 
		 The aim of this section is to show, using  the normal form  of Theorem \ref{FC},   that the morphism $R_n$ is an  injection.  
We  set $p=1/q$. We will use repeatedly 
\begin{equation}\label{basic} 
 R_n(h_{t_{n}}) =   p T_{\sigma_{n}t_{n+1}\sigma_{n}}+ 
  (p-1) T_{\sigma_{n}t_{n+1}},  
\end{equation}
as well as the following consequences of the defining relations (\ref{definingrelations}): \\ 

\noindent 
(i) 
In $TL\tilde{C}_{n+1}(q)$, a product $T_w T_y$, $w, y \in W^c(\tilde{C}_{n+1})$,   is a linear combination of terms $T_z$, $z\in W^c(\tilde{C}_{n+1})$,  with 
$L(z) \le L(w)+L(y)$  and $l(z) \le l(w)+l(y)$. \\ 

\noindent 
(ii) 
 When   a braid $T_{\sigma_{i}}T_{\sigma_{i+1}}T_{\sigma_{i}}$ appears in a computation, the use of 
$V(T_{\sigma_{i}},T_{\sigma_{i+1}})=0$ replaces it by a sum of terms $T_z$  with  $l(z) =2$, $1$ or $0$, hence the length decreases. \\ 

\noindent 
(iii)  When a braid $T_{t_{n+1}}T_{\sigma_{n}}T_{t_{n+1}}T_{\sigma_{n}}$  occurs, the use of 
$Z(T_{\sigma_{n}},T_{t_{n+1}})= 0 $ replaces it by a sum of terms $T_z$  with  $l(z) \le 3$  in which only one has affine length $2$, namely  $T_{t_{n+1}}T_{\sigma_{n}}T_{t_{n+1}} $. The other terms have affine length 
$1$ or $0$ and will be ignored since we will be interested in those terms with maximal affine length. \\

\begin{lemma}\label{formula} 
 
If  $ w \in W^c_1(\tilde C_{n}) \bigsqcup W^c(B_{n}) $ we have: 
$$
R_n(h_w) =    p^{L(w)}   T_{I(w)} 
+   \sum_{\begin{smallmatrix}L(x)\le L(w)\\ l(x)<l(I(w))\end{smallmatrix}}  \alpha_x T_x  \qquad (\alpha_x \in K). 
$$

\end{lemma}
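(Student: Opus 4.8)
The plan is to expand $R_n(h_w)$ along a reduced word for $w$ and isolate the term of maximal length. First I would dispose of the case $w \in W^c(B_n)$: here $L(w)=0$ and $I(w)=w$, so Lemma~\ref{morphismFn} gives $R_n(h_w)=T_w=p^{0}\,T_{I(w)}$ with empty remainder and there is nothing further to prove. So assume $w \in W^c_1(\tilde C_n)$, written in its normal form (\ref{formefinalefirsttype}) or (\ref{formefinalelongueur1}). Since this normal form is a reduced expression, $h_w$ is the product of the corresponding generators $h_{\sigma_i}$ (for $i<n$) and $h_{t_n}$, and hence $R_n(h_w)$ is the product of their images: each $\sigma_i$-generator contributes the basis element $T_{\sigma_i}$, while each of the $L(w)$ occurrences of $t_n$ contributes, via (\ref{basic}), the two-term expression $p\,T_{\sigma_n t_{n+1}\sigma_n}+(p-1)\,T_{\sigma_n t_{n+1}}$.

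Next I would expand this product by distributivity. Each resulting term is a product of basis elements (note that $\sigma_n t_{n+1}$ and $\sigma_n t_{n+1}\sigma_n$ each admit a unique reduced expression, hence are fully commutative), in which, for every occurrence of $t_n$, one has chosen either the length-$3$ factor $T_{\sigma_n t_{n+1}\sigma_n}$ or the length-$2$ factor $T_{\sigma_n t_{n+1}}$; in both cases the chosen factor has affine length $1$. Thus the affine lengths of the factors always sum to exactly $L(w)$, and by the property (i) stated above, every basis element $T_x$ that appears when such a product is rewritten in the basis satisfies $L(x)\le L(w)$.

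The heart of the argument is the length bookkeeping. The distinguished term is the one choosing the length-$3$ factor at every $t_n$; its factors have total length $(l(w)-L(w))+3L(w)=l(w)+2L(w)=l(I(w))$ by Theorem~\ref{IJ}, and it equals $p^{L(w)}$ times the product of the generators along the word obtained by substituting $\sigma_n t_{n+1}\sigma_n$ for each $t_n$. By Theorem~\ref{IJ} this substituted word is a reduced expression for $I(w)$, so by the defining relation $g_s g_y=g_{sy}$ for $l(sy)>l(y)$, passed to the quotient, the product of the generators along it is exactly $T_{I(w)}$, with no lower-order terms. Every other term chooses the length-$2$ factor at least once, so the total length of its factors is at most $l(I(w))-1$; by property (i), rewriting such a term in the basis produces only $T_x$ with $l(x)<l(I(w))$. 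In particular no such term can contribute to $T_{I(w)}$, so the coefficient of $T_{I(w)}$ is exactly $p^{L(w)}$, while all remaining basis terms satisfy $L(x)\le L(w)$ and $l(x)<l(I(w))$, which is the asserted formula.

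I expect the main obstacle to be the precise justification that the maximal-length term yields $T_{I(w)}$ with coefficient exactly $p^{L(w)}$ and produces no lower-order terms: this rests on the reducedness and full commutativity of the normal form for $I(w)$ furnished by Theorem~\ref{IJ}. The complementary point, that no strictly shorter term can feed back into $T_{I(w)}$, is exactly what the length bound in property (i) guarantees, and the affine-length bound $L(x)\le L(w)$ is immediate once one records that both options for each $t_n$ carry affine length $1$.
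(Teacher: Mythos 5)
Your proof is correct and follows essentially the same route as the paper's: both isolate the unique maximal-length term $p^{L(w)}T_{I(w)}$, obtained by choosing the factor $p\,T_{\sigma_{n}t_{n+1}\sigma_{n}}$ from (\ref{basic}) at every occurrence of $t_n$, and use property (i) together with the reducedness of the substituted word from Theorem~\ref{IJ} to confine all other contributions to strictly smaller Coxeter length. The only difference is organizational: the paper runs an induction on the affine length, peeling off one $t_n$ at a time, whereas you expand the whole product at once.
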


\begin{proof} We prove the statement by induction on the affine length 
$L(w)$ of $w$, recalling that for $w \in W^c(B_n)$ we have $R_n(h_w) = T_{I(w)}$, which implies the assertion for affine length $0$. 
 We now assume that the property holds for any $u$ of   affine length at most~$k$. \\ 

 Let $w $ in $ W^c _1(\tilde C_{n})$  have affine length  $L(w)= k+1$ and write $w$ in its  normal form given by 
 Theorem \ref{FC} so that  
$ w= u t_n v$ where $u $ is an element in $\in   W^c_1(\tilde C_{n}) \bigsqcup W^c(B_{n})  $ written in    normal form, $L(u)= k $,  
$v \in W^c( B_{n })$ and $l(w) = l(u)+ l( v) +1$. We have: 
 $$I(w)=J(w)= I(u) \sigma_{n}t_{n+1}\sigma_{n} I(v).
$$ 
Since $R_n$ is an homomorphism of algebras we have, using 
(\ref{basic}): 
\begin{eqnarray} 
  R_n (h_w)   =   R_n(h_u)  R_n(h_{t_n})  R_n(h_v)  	 
= R_n(h_u) \   
[ \;  p T_{\sigma_{n}t_{n+1}\sigma_{n}}+ 
  (p-1) T_{\sigma_{n}t_{n+1}}] 
\    T_{I(v)}. 
  \nonumber  
   \end{eqnarray}

 By the induction hypothesis,  
$R_n(h_u)$  is a linear combination of terms $T_z$ with $L(z) \le k$, and    has a unique term of maximal length 
(that is, a term $T_z$ where $z$ has maximal length), which is  $p^{L(u)}T_{I(u)}$.  
Recalling (i) above, we deduce that  $ R_n (h_w)$ is a  linear combination of terms $T_y$ with $L(y) \le k+1$, and 
has a unique term of maximal length which is  $  p^{L(u)} T_{I(u)} p T_{\sigma_{n}t_{n+1}\sigma_{n}} T_{I(v)}= p^{L(w)} T_{I(w)}$. 
   \end{proof} 

 \begin{proposition}\label{formula2}
 
  Let $w$ be in $W^c_2(\tilde C_{n}) $, then for some $\alpha_x, \beta_y  \in K$ we have
 
 $$
R_n(h_w) =   (-1)^{L(w)}   T_{I(w)} +  (-p)^{L(w)}   T_{J(w)} +  \sum_{\begin{smallmatrix}L(y)=L(w)\\ l(y)<l(I(w))\end{smallmatrix}}  \beta_y T_y 
+  \sum_{L(x)< L(I(w))}  \alpha_x T_x.
$$

 \end{proposition}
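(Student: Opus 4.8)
The plan is to mimic the proof of Lemma~\ref{formula}, proceeding by induction on the affine length $m = L(w)$ of the second type element $w$, but now keeping track of \emph{two} leading terms instead of one. The base case is $m = 2$ and the inductive step treats $m \geq 3$. Write $w$ in its normal form (\ref{formefinalesecondtype}) and peel off the leftmost syllable, $w = [i_1, n-1]\, t_n\, w'$, where $w'$ is obtained by deleting $[i_1,n-1]\, t_n$. By Theorem~\ref{FC}, $w'$ is again in normal form with $L(w') = m - 1$; it is a second type element when $m \geq 3$ and an element of affine length $1$ (hence in $W^c_1(\tilde C_n)$) when $m = 2$. Since $w$ is fully commutative and the displayed expression is reduced, every left factor is fully commutative, so $h_w = h_{[i_1,n-1]}\, h_{t_n}\, h_{w'}$; as $R_n$ is an algebra homomorphism and $[i_1,n-1] \in W^c(B_n)$, equation (\ref{basic}) gives
$$
R_n(h_w) = T_{[i_1,n-1]}\,\bigl(p\,T_{\sigma_n t_{n+1}\sigma_n} + (p-1)\,T_{\sigma_n t_{n+1}}\bigr)\,R_n(h_{w'}).
$$
For $R_n(h_{w'})$ I would substitute the inductive description: Lemma~\ref{formula} when $m = 2$ (a single leading term $p\,T_{I(w')}$), and the present Proposition when $m \geq 3$ (the two leading terms $(-1)^{m-1} T_{I(w')} + (-p)^{m-1} T_{J(w')}$).

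The heart of the argument is to expand this product and sort all resulting basis elements $T_z$ by the pair $(L(z), l(z))$. Properties (i)--(iii) guarantee that every term has $L(z) \le m$, and that at affine length exactly $m$ the ordinary length cannot exceed $l(I(w)) = l(J(w)) = l(w) + m$ (Theorem~\ref{IJ}); everything strictly below this top stratum is swept into the two error sums. The key geometric claim, to be established using Lemmas~\ref{lemmafull} and \ref{fullandsigma} together with the full commutativity constraints, is that \emph{exactly two} fully commutative elements occupy the top stratum, namely $I(w)$ and $J(w)$ of Definition~\ref{defIJ}. The mechanism is that the $(p-1)\,T_{\sigma_n t_{n+1}}$ branch feeds the ``$I$-channel'', absorbing $[i_1,n-1]\sigma_n t_{n+1} = [i_1,n] t_{n+1}$ and rebuilding the normal form $I(w)$, whereas producing the $t_{n+1}$-first word $J(w)$ forces one $\sigma_n$ to migrate left across $t_{n+1}$; this is possible only by invoking the braid $\sigma_{n-1}\sigma_n\sigma_{n-1}$ through $V(T_{\sigma_{n-1}},T_{\sigma_n}) = 0$ and the pattern $\sigma_n t_{n+1}\sigma_n t_{n+1}$ through $Z(T_{\sigma_n},T_{t_{n+1}}) = 0$, each such reduction introducing the sign and the power of $p$ recorded in the statement.

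The final step is to collect the coefficients of $T_{I(w)}$ and $T_{J(w)}$. This amounts to a $2\times 2$ linear recursion: the pair of leading coefficients $\bigl((-1)^{m-1},(-p)^{m-1}\bigr)$ of $R_n(h_{w'})$ is transported, through multiplication by $T_{[i_1,n-1]}\bigl(p\,T_{\sigma_n t_{n+1}\sigma_n} + (p-1)\,T_{\sigma_n t_{n+1}}\bigr)$ and the reductions above, to the pair $\bigl((-1)^{m},(-p)^{m}\bigr)$. I would first verify this transfer explicitly in the base case $m = 2$, starting from the single term $p\,T_{I(w')}$ supplied by Lemma~\ref{formula} and checking that it reproduces $(-1)^2 T_{I(w)} + (-p)^2 T_{J(w)}$ modulo lower strata, and then confirm that the same transfer propagates the two powers for $m \geq 3$.

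The main obstacle is precisely this coefficient bookkeeping. One must show that, after applying the $V$- and $Z$-relations, all cross-contributions either drop into a lower stratum (and thus disappear into the two error sums $\sum_{L(y)=L(w),\, l(y)<l(I(w))}\beta_y T_y$ and $\sum_{L(x)<L(w)}\alpha_x T_x$) or combine to give exactly the clean coefficients $(-1)^m$ and $(-p)^m$, leaving no residual term of maximal length other than $T_{I(w)}$ and $T_{J(w)}$. Controlling the full-commutativity of the intermediate words (so that one knows precisely which relation fires and with which sign) is where the bulk of the work lies, and it is exactly here that the uniqueness and rigidity of the normal form from Theorem~\ref{FC} are indispensable.
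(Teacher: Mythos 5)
Your overall strategy --- induction on the affine length, peeling a syllable off the left, expanding via (\ref{basic}) and tracking two leading terms through the $V$- and $Z$-relations --- is the same as the paper's, and your base-case bookkeeping and the $2\times 2$ transfer of coefficients are in the right spirit. But there is a genuine gap at exactly the point you flag as ``the main obstacle'': the induction hypothesis you propose to propagate (the statement of the Proposition itself) is too weak to close the induction. In the inductive step you must multiply the new left factor $T_{[i_1,n-1]}\bigl(p\,T_{\sigma_n t_{n+1}\sigma_n}+(p-1)\,T_{\sigma_n t_{n+1}}\bigr)$ against the error sum $\sum_{L(y)=L(w'),\,l(y)<l(I(w'))}\beta_y T_y$ coming from $R_n(h_{w'})$. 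A term with $l(y)=l(I(w'))-1$ can a priori produce a reduced product of affine length $L(w)$ and Coxeter length $l([i_1,n-1])+3+l(y)=l(I(w))$, i.e.\ a term landing exactly in the top stratum; nothing in your argument prevents such a cross term from contributing an unknown multiple of $T_{I(w)}$ or $T_{J(w)}$ and destroying the clean coefficients $(-1)^{L(w)}$ and $(-p)^{L(w)}$. Knowing \emph{which} fully commutative elements can occupy the top stratum is not enough; you must control the \emph{coefficients} they receive from products involving the uncontrolled error terms of the previous stage.

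The paper closes this gap by proving a strictly stronger statement by induction, namely (\ref{precise}): every error term of maximal affine length is exhibited in the factored form $T_{t_{n+1}}T_{\sigma_{n}}T_{t_{n+1}}\,T_y$ with $L(y)=L(w)-2$ and $l(y)<l(I(w))-3$. With that shape, left multiplication by the new factor (after commuting $t_{n+1}$ past $[i,n-1]$) immediately produces either $T_{t_{n+1}}^2$, which drops the affine length, or the pattern of (\ref{braid4}), whose only surviving term of maximal affine length again begins with $T_{t_{n+1}}T_{\sigma_{n}}T_{t_{n+1}}$ and has strictly controlled length --- so the cross contributions provably stay below the top stratum and the refined form reproduces itself. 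You would need to build this (or an equivalent) strengthening into your induction. Two smaller points: the paper first reduces to the case where the normal form (\ref{formefinalesecondtype}) begins and ends with $t_n$ (stripping $[i_1,n-1]$ and $w_r$ off once and for all), which keeps the peeled syllable uniform; and its base case $L(w)=2$ is a fully explicit four-way product of known factors, rather than an appeal to Lemma \ref{formula} for an affine-length-$1$ element, which would reintroduce uncontrolled error terms at the very first step.
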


\begin{proof} We use the normal form (\ref{formefinalesecondtype}) of  Theorem \ref{FC} for $w \in W^c_2(\tilde C_{n}) $ and we remark first  that it is enough to prove our assertion for $w_r=1$ and $i_1=n$. Indeed, the normal form for $w$ is  $w = [  i_1,n-1 ] u w_r$ where $u$ is an element of $W^c_2(\tilde C_{n}) $  having same affine length as $w$, whose normal form begins and ends with $t_n$. Then, assuming our result holds for $u$, we have:  
$$
\begin{aligned}
R_n(h_w)   = &R_n(h_{ [  i_1,n-1 ]})  R_n(h_{u}) R_n(h_{w_r}) \\  = 
T_{ [  i_1,n-1 ] } &((-1)^{L(u)}   T_{I(u)} +  (-p)^{L(u)}   T_{J(u)} +  \sum_{\begin{smallmatrix}L(y)=L(u)\\ l(y)<l(I(u))\end{smallmatrix}}  \beta_y T_y 
+  \sum_{L(x)< L(I(u))}  \alpha_x T_x )  T_{I(w_r)},  
\end{aligned} 
$$ 
hence our result since $I(w)=  [  i_1,n-1 ]I( u) I( w_r)$ and $J(w)=  [  i_1,n-1 ]J( u) J( w_r)$, both expressions reduced and fully commutative, and the remaining terms will have either the same affine length as $I(w)$ but a smaller Coxeter length, or a smaller affine length. \\

We work by induction on the affine length and remark once and for all that the development of $  R_n (h_w) $ 
will contain only terms $T_z$ with $L(z) \le L(w)$ (see (i) above). To prove our claim, we will then focus on terms of affine length $L(w)$. We will actually prove by induction the following more precise statement: \\

{\it   Let $w$ be in $W^c_2(\tilde C_{n}) $, whose normal form  (\ref{formefinalesecondtype})  begins and ends with $t_n$. Then for some $\alpha_x, \beta_y  \in K$ we have: }  
\begin{equation}\label{precise} 
\begin{aligned} 
R_n(h_w) =   (-1)^{L(w)}   &T_{I(w)} +  (-p)^{L(w)}   T_{J(w)} 
 \\ &+  T_{t_{n+1}}T_{\sigma_{n}}T_{t_{n+1}} \   (\sum_{\begin{smallmatrix}L(y)=L(w)-2\\ l(y)<l(I(w))-3 \end{smallmatrix}}  \beta_y T_y  )
+  \sum_{L(x)< L(I(w))}  \alpha_x T_x.
\end{aligned} 
\end{equation}

We start with an element 
  $w=t_n  [  i,n-1 ] t_n$ for $-(n-1) < i \leq n-1$.  Using 
(\ref{basic}) we see that  $R_n(h_w)= R_n(h_{t_{n}})R_n(h_{ [  i,n-1 ] })  R_n(h_{t_{n}})  $ is a linear combination of elements of the basis of  $TL\tilde{C}_{n+1}(q)$ with affine length at most $2$, appearing in  the following products:

 \begin{figure}[ht]
				\centering
				\begin{tikzpicture}
               \begin{scope}[xscale = 1]

  \node at (-4.5,1)  {$[1]~(p-1)   \   T_{\sigma_{n}t_{n+1}}$};
  \node at (-4.9,-1) {$[2]~p \  T_{\sigma_{n}t_{n+1}\sigma_{n}}$};

\draw (-1.5,0)  -- (-2.5,-1);
	\draw (-1.5,0)  -- (-2.5,1);
	
  \node at (0,0)  {$T_{[  i, n-1 ]} $};

	\draw (1.5,0)  -- (2.5,-1);
	\draw (1.5,0)  -- (2.5,1);

  \node at (4.5,1)  {$[1']~(p-1)  \   T_{\sigma_{n}t_{n+1}}$  };
  \node at (4.1,-1) {$[2']~p \ T_{\sigma_{n}t_{n+1}\sigma_{n}}$};
 
\end{scope}
               \end{tikzpicture}
							
			\end{figure}	
The product associated to [1] and [1'] is $(p-1)^2T_{I(w)}$.

 Developping the one associated to  [1] and [2'], since $T_{ t_{n+1}}$ commutes with $T_{[  i, n-1 ]} $, 
 we find the braid 
\begin{equation}\label{braid4}
T_{t_{n+1}}T_{\sigma_{n}}T_{t_{n+1}} T_{\sigma_{n}}= - T_{t_{n+1}}T_{\sigma_{n}}T_{t_{n+1}} 
-T_{\sigma_{n}}T_{t_{n+1}} T_{\sigma_{n}} - T_{t_{n+1}} T_{\sigma_{n}}
-T_{\sigma_{n}}T_{t_{n+1}}  - T_{t_{n+1}} - T_{\sigma_{n}} -1 
\end{equation}
 so, recalling (iii) above, we get the term $-p(p-1) T_{I(w)}$ and terms of affine length at most $1$.

 Developping the product corresponding to  [2] and [1'] we find the braid 
\begin{equation}\label{braid3}
T_{\sigma_{n}}T_{\sigma_{n-1}}T_{\sigma_{n}}
=  -T_{\sigma_{n}}T_{\sigma_{n-1}} - T_{\sigma_{n-1}}T_{\sigma_{n}}-  T_{\sigma_{n}}
-T_{\sigma_{n-1}} -1. 
\end{equation} 	     
The first term here leads to the braid $T_{\sigma_{n}}T_{t_{n+1}}T_{\sigma_{n}}T_{t_{n+1}} $, hence to only one term of affine length $2$, namely 
$p(p-1)T_{t_{n+1}}T_{\sigma_{n}}T_{t_{n+1}} T_{[  i, n-1 ]}$ which has length  $l(I(w))-1$. The second leads to 
$-p(p-1) T_{I(w)}$. The third $-  T_{\sigma_{n}}$  leads to the braid  (\ref{braid4}) again, of which we keep only the first term, 
hence a  
   $p(p-1)T_{t_{n+1}}T_{\sigma_{n}}T_{t_{n+1}} T_{[  i, n-2 ]}$ which has length  $l(I(w))-2$. 
The fourth and the fifth terms lead to  non reduced expressions  containing $T_{t_{n+1}}^2$, hence to terms of   affine length  strictly less than $L(w)$.

 Similarly, developping the product with   
 [2] and [2'], we find the braid $T_{\sigma_{n}}T_{\sigma_{n-1}}T_{\sigma_{n}}$. The first term in (\ref{braid3}) leads to the braid  $T_{\sigma_{n}}T_{t_{n+1}}T_{\sigma_{n}}T_{t_{n+1}} $ of which we keep 
the term $-T_{t_{n+1}}T_{\sigma_{n}}T_{t_{n+1}}$, leading to a final term $ p^2 T_{J(w)}$. 
The second term in (\ref{braid3}) leads to $T_{t_{n+1}}T_{\sigma_{n}}T_{t_{n+1}} T_{\sigma_{n}}$ of which we keep as above  $-T_{t_{n+1}}T_{\sigma_{n}}T_{t_{n+1}}$, leading to $ p^2 T_{I(w)}$.
 The third $-  T_{\sigma_{n}}$  leads to the braid  (\ref{braid4}), of which we keep the first term, 
hence a   $p^2  T_{t_{n+1}}T_{\sigma_{n}}T_{t_{n+1}} T_{[  i, n-2 ]} T_{\sigma_{n}}$ where 
$T_{[  i, n-2 ]} T_{\sigma_{n}}$ has length 
at most  $l(I(w))-4$ (we could  reduce this term again with (\ref{braid4}) but this is the form suited for our purpose). With the fourth and fifth terms we get as before a $T_{t_{n+1}}^2$, decreasing the affine length.  

    Summing up we get  our 
		  two elements: 
$$((p-1)^2+p^2+2p(1-p))T_{I(w)} = T_{I(w)} \quad \text{ and } \quad p^{2}T_{J(w)},  $$ 
plus a product of $ \  T_{t_{n+1}}T_{\sigma_{n}}T_{t_{n+1}}$ by terms of affine length $0$ and Coxeter length at most $l(I(w))-4$, plus 
   terms of smaller affine length, as announced.  \\

 Now that we have detailed the case of affine length $2 $, we will be quicker with the induction step. Indeed 
take  $w$   in $W^c_2(\tilde C_{n}) $, in its normal form  (\ref{formefinalesecondtype})  beginning and ending with $t_n$, of affine length at least $3$, and write it    $w= t_n[  i ,n-1 ] u$ where $-(n-1) < i \leq n-1$ and 
$u$ satisfies the same conditions except that $L(u) \ge 2$. Assuming that our statement holds for $u$ we write: 

\begin{equation} 
\begin{aligned} 
R_n(h_w) &=  R_n(h_{t_{n}}) R_n( h_{ [  i,n-1 ] })   R_n(h_u)   \\ 
 &= ( p T_{\sigma_{n}t_{n+1}\sigma_{n}} + 
   (p-1) T_{\sigma_{n}t_{n+1}}) \   T_{[  i, n-1 ]}  \    [ 
  (-1)^{L(u)}    T_{I(u)} 
 \\ &   +  (-p)^{L(u)}   T_{J(u)}  +  T_{t_{n+1}}T_{\sigma_{n}}T_{t_{n+1}} \   (\sum_{\begin{smallmatrix}L(y)=L(u)-2\\ l(y)<l(I(u))-3 \end{smallmatrix}}  \beta_y T_y  )
+  \sum_{L(x)< L(I(u))}  \alpha_x T_x  ] 
\end{aligned} 
\end{equation}
and examine the terms of this product, ignoring the sum over $x$ with  $L(x)< L(I(u))$ that will anyway lead 
to terms $T_z$ with $L(z)< L(I(u))$ (rule (i)). We draw as before: 
\begin{figure}[ht]
				\centering
				\begin{tikzpicture} 
               \begin{scope}[xscale = 1]

  \node at (-4.5,1)  {$[1]~(p-1)   \   T_{\sigma_{n}t_{n+1}}$};
  \node at (-4.9,-1) {$[2]~p \  T_{\sigma_{n}t_{n+1}\sigma_{n}}$};

\draw (-1.5,0)  -- (-2.5,-1);
	\draw (-1.5,0)  -- (-2.5,1);
	
  \node at (0,0)  {$T_{[  i, n-1 ]} $};

	\draw (1.5,0)  -- (2.5,-1);
	\draw (1.5,0)  -- (2.5,0);
	\draw (1.5,0)  -- (2.5,1);

  \node at (4.5,1)  {$[1']~  (-1)^{L(u)}    T_{I(u)} $  };

  \node at (4.5,0)  {$[2']~ (-p)^{L(u)}   T_{J(u)}$};

  \node at (4.5,-1) {$[3']~   T_{t_{n+1}}T_{\sigma_{n}}T_{t_{n+1}}$};
 
\end{scope}
               \end{tikzpicture}
							
			\end{figure}	

 [1] and [3']:  we have a $T_{t_{n+1}}^2$ so the affine length drops. 

  [2] and [3']:  we have the braid (\ref{braid4}) which we replace  by $ -  T_{t_{n+1}}T_{\sigma_{n}}T_{t_{n+1}} $, leading to products of $T_{t_{n+1}}T_{\sigma_{n}}T_{t_{n+1}}$ with terms coming from 
$    T_{[  i, n-1 ]}T_{\sigma_{n}}T_{t_{n+1}} T_y $  
hence terms $T_z$ with $L(z) \le L(w)-2$ and $l(z) < l(I(u))-3 + l([  i, n-1 ] {\sigma_{n}} t_{n+1}) = l(I(w)-3$. 

[1] and [2']:   $J(u) $ is fully commutative with a reduced expression starting with  $  {t_{n+1}}$, 
hence  $T_{J(u)} = T_{t_{n+1}} T_v$ and we get a $T_{t_{n+1}}^2$  dropping the affine length.

[2] and [2']: we write again $T_{J(u)} = T_{t_{n+1}} T_v$ and get the braid  (\ref{braid4}) which we replace  by $ -  T_{t_{n+1}}T_{\sigma_{n}}T_{t_{n+1}} $. We thus get: 
$$(-p)(-p)^{L(u)}  T_{t_{n+1}}T_{\sigma_{n}}T_{t_{n+1}} T_{[  i, n-1 ]} T_v = (-p)^{L(w)} T_{J(w)}.$$

[1] and [1']: this gives directly $ (p-1) (-1)^{L(u)}    T_{I(w)} $. 

[2] and [1']: $I(u) $ is fully commutative with a reduced expression starting with  $ \sigma_n {t_{n+1}}$, we write then $T_{I(u)} = T_{\sigma_{n}} T_{t_{n+1}} T_s$ and get the braid (\ref{braid3}). The first term 
 $-T_{\sigma_{n}}T_{\sigma_{n-1}}$ in (\ref{braid3}), leads to (\ref{braid4})  which we replace  by $ -  T_{t_{n+1}}T_{\sigma_{n}}T_{t_{n+1}} $, itself multiplied on the right by 
$T_{[  i, n-1 ]}    T_s$ that develops as a sum of terms $T_z$ of affine length at most $ L(I(u))-1=L(I(w))-2 $ as expected.  The second  term 
 $-T_{\sigma_{n-1}} T_{\sigma_{n}} $ in (\ref{braid3}) provides the term 
$( -p)  (-1)^{L(u)}    T_{I(w)}$ that, added with the one from ([1] and [1']), gives us the 
$ (-1)^{L(w)}    T_{I(w)} $ that we need. 
The third term $-  T_{\sigma_{n}} $ in (\ref{braid3}) again leads to (\ref{braid4}) hence a term starting with $  T_{t_{n+1}}T_{\sigma_{n}}T_{t_{n+1}} $. The fourth and fifth will provide a $T_{t_{n+1}}^2$ dropping the affine length.
\end{proof}

 \medskip

\begin{theorem}\label{R} 
The tower of affine Temperley-Lieb  algebras

\begin{eqnarray}
			  TL\tilde{C}_{1}(q)  \stackrel{R_{1}}{\longrightarrow}  TL\tilde{C}_{2}(q) \stackrel{R_{2}} {\longrightarrow} TL\tilde{C}_{3}(q) \longrightarrow \cdots  TL\tilde{C}_{n}(q)\stackrel{R_{n}} {\longrightarrow}  TL\tilde{C}_{n+1}(q)\longrightarrow  \cdots  \nonumber\\\nonumber
		\end{eqnarray}
	
	is a tower of faithful arrows. 	\\  
\end{theorem}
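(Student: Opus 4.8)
The plan is to deduce faithfulness of each $R_n$ from the fact that the images $R_n(h_w)$, $w\in W^c(\tilde C_n)$, of the canonical basis of $TL\tilde C_n(q)$ are linearly independent in $TL\tilde C_{n+1}(q)$; since $\{h_w\}_{w\in W^c(\tilde C_n)}$ is a $K$-basis of $TL\tilde C_n(q)$, this is exactly what is needed. The initial arrow $R_1\colon K=TL\tilde C_1(q)\to TL\tilde C_2(q)$ is the unital inclusion, injective because its target is a nonzero free $K$-module, so we may focus on $n\ge 2$. The whole argument is a leading-term (unitriangularity) argument resting on Lemma \ref{formula} and Proposition \ref{formula2}, read through the combinatorics of $I$ and $J$ from Theorem \ref{IJ}.

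First I would order the basis $\{T_z\}$ of $TL\tilde C_{n+1}(q)$ by the bidegree $z\mapsto (L(z),l(z))$, compared lexicographically (affine length first, then Coxeter length), and record the top-bidegree part of each $R_n(h_w)$. By Lemma \ref{formula}, for $w\in W^c_1(\tilde C_n)\sqcup W^c(B_n)$ no term of $R_n(h_w)$ reaches Coxeter length $l(I(w))$ except the single term $p^{L(w)}T_{I(w)}$, which is therefore the unique term of maximal bidegree $(L(w),l(I(w)))$. By Proposition \ref{formula2}, for $w\in W^c_2(\tilde C_n)$ the terms of maximal bidegree $(L(w),l(I(w)))=(L(w),l(J(w)))$ are precisely $(-1)^{L(w)}T_{I(w)}+(-p)^{L(w)}T_{J(w)}$, all remaining terms being strictly lower. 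Using Theorem \ref{IJ} I would then check that these leading supports never collide: as $I,J$ are injective, respect the partition into first and second type, and satisfy $I(W^c_2(\tilde C_n))\cap J(W^c_2(\tilde C_n))=\emptyset$, the indices $I(w)$ (for $w\in W^c_1\sqcup W^c(B_n)$) are pairwise distinct and lie in $W^c_1(\tilde C_{n+1})\sqcup W^c(B_{n+1})$, the indices $I(w),J(w)$ (for $w\in W^c_2$) are pairwise distinct and lie in $W^c_2(\tilde C_{n+1})$, and the two families are disjoint.

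Now suppose $\sum_w c_w R_n(h_w)=0$ with finitely many nonzero coefficients and at least one $c_w\neq 0$. Let $M$ be the largest affine length occurring among the $w$ with $c_w\neq 0$, and $\Lambda$ the largest value of $l(I(w))$ among those $w$ with $c_w\neq 0$ and $L(w)=M$. Extracting the bidegree-$(M,\Lambda)$ homogeneous part of the relation, only $w$ with $L(w)=M$ can contribute at affine length $M$, and for each of these every correction term has Coxeter length $<l(I(w))\le\Lambda$; hence this homogeneous part equals the sum of the leading terms of those $w$ with $L(w)=M$ and $l(I(w))=\Lambda$. By the preceding paragraph it is a combination of \emph{pairwise distinct} basis elements $T_z$ with coefficients $c_w p^M$ (first type) and $c_w(-1)^M,\ c_w(-p)^M$ (second type). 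Since $p=q^{-1}$ is a unit, these coefficients are nonzero multiples of the $c_w$, so they must all vanish; thus $c_w=0$ for every $w$ with $L(w)=M$ and $l(I(w))=\Lambda$, contradicting the maximality defining $\Lambda$. Hence all $c_w=0$ and each $R_n$ is injective.

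I expect the one delicate point to be the bookkeeping in the last step. The refinement to the top Coxeter length $\Lambda$, rather than merely the top affine length $M$, is essential: without it the affine-length-$M$ correction terms of one basis image could overlap the leading terms of another, and no clean vanishing could be read off. Equally essential is that a single second-type element contributes \emph{two} leading monomials $T_{I(w)},T_{J(w)}$ of the same bidegree but with distinct, never-colliding indices, which is exactly the disjointness $I(W^c_2(\tilde C_n))\cap J(W^c_2(\tilde C_n))=\emptyset$ furnished by Theorem \ref{IJ}. Once Lemma \ref{formula} and Proposition \ref{formula2} are granted, the remainder is formal.
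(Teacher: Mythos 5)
Your proposal is correct and follows essentially the same route as the paper: both arguments rest on Lemma \ref{formula} and Proposition \ref{formula2} to isolate, in a putative dependence relation, the terms of maximal affine length and then maximal Coxeter length, and then invoke the injectivity, type-preservation, and disjointness properties of $I$ and $J$ from Theorem \ref{IJ} to force the corresponding coefficients to vanish. Your lexicographic bidegree $(L(z),l(z))$ is just a cleaner packaging of the paper's two-step maximality argument (and absorbs the paper's separate $m=0$ case, handled there via Lemma \ref{morphismFn}, into the uniform leading-term statement).
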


\begin{proof} We need to show that $R_n$ is an injective homomorphism of algebras. 
A basis for $TL\tilde{C}_{n}(q)$ is given by the elements $h_w$ where $w$ runs over 
$ W^c(\tilde C_{n })$. Assume there are non trivial dependence relations between the images of these basis elements. Pick one such relation, say 
\begin{equation}\label{eqn:dr}
\sum_w \lambda_w R_n(h_w) =0 , 
\end{equation}
 and let 
$m= \max \{ L(w)   \  | \ 
w \in W^c(\tilde C_{n }) \text{ and } \lambda_w \ne 0 \}$. If $m=0$, all elements $w$ such that 
$\lambda_w \ne 0$ belong to $ W^c(B_{n })$,  contradicting the injectivity of 
  the restriction of $R_n$ to $  TLB_{n }(q)$ (Lemma~\ref{morphismFn}). 
So $m$ is positive. 

We know from  Lemma \ref{formula}  and Proposition \ref{formula2} that 
for $w \in  W^c(\tilde C_{n })$, 
$R_n(h_w) $ is a linear combination of terms $T_z$ where $z$ has affine length at most $L(w)$. Therefore the terms  $T_z$ where $z$ has maximal affine length in the development of \eqref{eqn:dr} are exactly the terms of maximal affine length in the development of $ \  \sum_w \lambda_w R_n(h_w) $ where $w$ runs over the set of fully commutative elements of affine length $m$ in $ W (\tilde C_{n })$.   Among them,    Lemma \ref{formula}  and Proposition \ref{formula2} give us 
 the terms of maximal Coxeter length, hence   the terms of maximal Coxeter length among the terms of maximal affine length  $m$  in the development of \eqref{eqn:dr} are  the terms of maximal Coxeter length     in the sum: 
\begin{equation}\label{eqn:fin}
  \sum_{\begin{smallmatrix}   x \in W^c_1(\tilde C_{n}) \\ L(x) = m  \\ \alpha_x \ne 0
\end{smallmatrix} }   \alpha_x   T_{I(x)}   \ 
 +  \sum_{\begin{smallmatrix}   x \in  W^c_2(\tilde C_{n}) \\ L(x)=m \\ \alpha_x \ne 0
\end{smallmatrix} }  \alpha_x ( (-1)^{m}   T_{I(x)} +   (-p)^{m} T_{J(x)}  )    
\end{equation}
The elements $T_d$ for 
 $d  \in  W^c(\tilde C_{n+1})$ form a basis of $\widehat{TL}_{n+1}(q)$. Moreover 
 $I$ and $J$ are injective, map $W^c_i(\tilde C_{n})$ to $W^c_i(\tilde C_{n+1})$ for $i=1, 2$,  and 
$I(W^c_2(\tilde C_{n}))$ and $J(W^c_2(\tilde C_{n}))$ are disjoint  
(Theorem \ref{IJ}). Therefore  
 we see that all the coefficients $\alpha_x$ for $L(x)=m$ and $l(I(x)$ maximal in \eqref{eqn:fin}  must be $0$, a contradiction. 
\end{proof}

		\vspace{1cm}
 
\renewcommand{\refname}{REFERENCES}


\begin{thebibliography}{}
\bibitem{Sadek_Thesis} S. Al Harbat. On the affine braid group, affine Temperley-Lieb algebra and Markov trace. PH.D
Thesis, Universit\'e Paris-Diderot-Paris 7, 2013. \label{Sadek_Thesis}


\bibitem{Sadek_2013_2} S. Al Harbat. Markov trace on a tower of affine Temperley-Lieb algebras of type $\tilde{A }$. 
  J.   Knot Theory Ramifications   24 (2015) 1--28. 
DOI : 10.1142/S0218216515500492



\bibitem{Sadek_2015}  S. Al Harbat.   
    Tower of fully commutative elements of type $\tilde A$ and applications.   arXiv:1507.01521 (2015) 1--23. \label{Sadek_2015}



 

\bibitem{CFC} T. Boothby,  J. Burkert, M. Eichwald, D. C. Ernst, R. M. Green, M. Macauley. On the cyclically fully commutative elements of Coxeter groups. J. Algebraic Combin. 36 (2012), no. 1, 123--148.\label{CFC}

\bibitem{Bourbaki_1981} N. Bourbaki. Groupes et alg\`ebres de Lie, chapitres 4, 5, 6. Masson, 1981. \label{Bourbaki_1981}

\bibitem{Digne_2012} F. Digne. A Garside presentation for Artin-Tits groups of type  $\tilde C_n$. Ann. Inst. Fourier, Grenoble. 62, 2 (2012) 641-666. \label{Digne_2012}

\bibitem{Ernst_2008} D.C. Ernst. A diagrammatic representation of an affine C Temperley-Lieb algebra. Ph.D. Thesis, University of Colorado at Boulder, 2008. \label{Ernst_2008}

\bibitem{Ernst_2012}  D.C. Ernst. Diagram calculus for a type affine C Temperley-Lieb algebra, I. J. Pure Appl. Algebra 216 (2012), no. 11, 2467--2488.

\bibitem{Fan_1995}  C.K. Fan. A Hecke Algebra Quotient and Properties of Commutative Elements of a Weyl group.  Ph.D. Thesis, MIT, 1995.\label{Fan_1995}

 



\bibitem{Graham} J.J. Graham. Modular representations of Hecke algebras and related algebras. Ph.D. Thesis, University of Sydney, 1995.\label{Graham}

 
    

\bibitem{Green2006} R.M. Green. Star reducible Coxeter groups. Glasg. Math. J. 48 (2006), no. 3, 583--609.\label{Green2006}

\bibitem{Green2007}  R.M. Green. Generalized Jones traces and Kazhdan-Lusztig  bases. J. Pure Appl. Algebra 211 (3) (2007) 744--772.\label{Green2007}

\bibitem{HJ} C.R.H. Hanusa, B.C. Jones. The enumeration of fully
commutative affine permutations. European J. Combin. 31 (2010), no. 5,
134--1359.\label{HJ}

\bibitem{Hee} J.-Y. H\'ee.  Syst\`eme de racines sur un anneau commutatif totalement ordonn\'e.   Geom. Dedicata 37 (1991), no. 1, 6--102.

\bibitem{J87}
V. F. R. Jones. 
Hecke algebra representations of braid groups and link polynomials.
Ann. of Math. (2) 126 (1987), no. 2, 335--388. 
 

\bibitem{St96} J. R. Stembridge. On the fully commutative elements of Coxeter groups. J. Algebraic Combin. 5 (4) (1996) 353--385.\label{St96}

\bibitem{St} J. R. Stembridge. Some combinatorial aspects of reduced words in finite Coxeter groups. Transactions A.M.S. 349 (4)  1997, 1285--1332.\label{St}



\end{thebibliography}
\end{document}